\theoremstyle{plain}
\newtheorem{theorem}{Theorem}[section]
\newtheorem{corollary}[theorem]{Corollary}
\newtheorem{lemma}[theorem]{Lemma}
\newtheorem{proposition}[theorem]{Proposition}
\theoremstyle{definition}
\theoremstyle{remark}
\newtheorem{remark}[theorem]{Remark}
\numberwithin{equation}{section}
\title{Random walks in time-inhomogeneous random environment conditioned to stay positive}
\author{Wenming Hong \thanks{School of Mathematical Sciences \& Laboratory of Mathematics and Complex Systems, Beijing Normal University, Beijing 100875, P.R. China. Email: wmhong@bnu.edu.cn} ~and Shengli Liang \thanks{School of Mathematical Sciences \& Laboratory of Mathematics and Complex Systems, Beijing Normal University, Beijing 100875, P.R. China. Email: liangshengli@mail.bnu.edu.cn}}
\begin{document}
	
	\maketitle	
	\begin{center}
		\begin{minipage}{12cm}
			\begin{center}\textbf{Abstract}\end{center}
			We consider a random walk $\{S_n\}_{n\in \mathbb{N}}$ in time-inhomogeneous random environment $\xi$. For almost each realization of $\xi$, we  formulate a quenched harmonic function, based on which  we can define the random walk in random environment conditioned to stay positive by the Doob's {\it h}-transform. 
			Furthermore,  we prove a quenched invariance principle for the  conditioned random walk for almost each realization of $\xi$.
		
			\bigskip
			
			\mbox{}\textbf{Keywords}: Random walk, time-inhomogeneous random environment, conditioned to stay positive,  quenched harmonic function, quenched invariance principles. \\
			\mbox{}\textbf{Mathematics Subject Classification}:  Primary 60G50;
			secondary 60K37, 60F17.
			
		\end{minipage}
	\end{center}


\section{Introduction and main results}

\subsection{Motivation}

Random walks conditioned to stay positive have received a substantial research attention. We refer to Tanaka \cite{Tan89}, Bertoin and Doney \cite{BD94}, Biggins \cite{Big03}, Caravenna \cite{Car05}, Vatutin and Wachtel \cite{VW09} to cite only a few. Recent progress has been made for more general case, Denisov and Wachtel \cite{DW15} considered multidimensional Random walks in general cones, Denisov, Sakhanenko and Wachtel \cite{DSW18} treated the random walk with independent but non-identically distributed	increments and Grama, Lauvergnat and Le Page \cite{GLL18} focused on Markov walks (which is time-homogeneous). On the other hand, applications of the random walks conditioned to stay positive appear in many situations. For instance, the asymptotic behaviour of the survival probability for critical branching processes in random environment is closely related to the behaviour of the associated random walk conditioned to stay positive, see Kozlov \cite{Koz76}, Geiger and Kersting \cite{GK00}, Afanasyev et al. \cite{AGKV05}. The convergence of minimal position, additive martingale and derivative martingale in branching random walks is connected to random walks conditioned to stay positive and harmonic function, see A{\"{\i}}d{\'e}kon \cite{Aid13}, A{\"{\i}}d{\'e}kon and Shi \cite{AS14}, Biggins and Kyprianou \cite{BK04}, Chen \cite{Che15}, Hu and Shi \cite{HS09}.

The phrase ``random walk conditioned to stay positive" has at least two different interpretations. In the first, we consider the random walk conditioned to stay positive up to time $n$, which is a discrete version of meander.  The second interpretation involves conditioning on the event that the random walk never enters $\left(-\infty,0\right]$, and so can be thought of as a discrete version of the Bessel process, i.e., random walks conditioned to stay positive is defined by Doob's {\it h}-transform with the help of a harmonic function.

\

The main purpose of this paper is to formulate random walks with random environment in time (abbreviated as RWRE) conditioned to stay positive on the interval $(0,\infty)$. To this end, a key step is to formulate a (quenched) harmonic function. However, under the quenched probability, for almost each realization of $\xi$, the RWRE $\{S_n\}_{n\in \mathbb{N}}$ is a sum of independent but not necessarily identically distributed random variables, one loses the duality property. Therefore, there is no hope to generalize the traditional approach, the Wiener-Hopf factorisation, to formulate the harmonic function by the underlying ladder process of the random walks. Inspired by the universality approach used in \cite{DSW18}, we prove the existence of the limit $\lim\limits_{n\to\infty}U_{n}(\xi,y)=U(\xi,y):=-\mathbb{E}_\xi(S_{\tau_y})$ for  almost each realization of $\xi$, where $U_n(\xi,y):=\mathbb{E}_\xi(y+S_n; \tau_{y}>n)$ and $\tau_{y}:=\inf\left\{n>0: y+S_n\leq0\right\}$. We find that as a function of the environment $\xi$, $U(\xi,y)$ is a harmonic function in the sense of $U(\xi,y)=\mathbb{E}_{\xi}\left[U(\theta \xi,y+S_{1});\tau_{y}>1\right]$, for $P\text{-a.s.}\xi$, and some  properties of the quenched harmonic function $U(\xi,y)$ have been derived (see Theorem \ref{Th1} below), which is one of the novelty of this paper. It should be pointed out that the quenched harmonic function $U(\xi,y)$ is a function of the trajectory of a realization of $\xi:=(\xi_1,\xi_2, \cdots)$, it can not be obtained in the annealed probability even for the independent and identically distributed environment.

With the quenched harmonic function $U(\xi,y)$ in hand, we can formulate the RWRE conditioned to stay positive on the interval $(0,\infty)$, whose law is defined by Doob's {\it h}-transform: for any $N\in \mathbb{N}^*$, $y\geq0$ and $B\in\sigma\left(\xi\right)\otimes\mathscr{F}^{\xi}_N$,
\begin{equation}\label{P+}
	\mathbb{P}^+_{\xi,~y}(B):=\frac{\mathbb{E}_{\xi,~y}\left(U(\theta^N \xi, S_N)1_{\{\tau_0>N\}}1_B\right)}{U(\xi,y)}.
\end{equation}
Furthermore, we will  prove a quenched functional central limit theorem for this conditioned walks.

In addition to the theoretical interests of their own, an original motivation to consider the RWRE conditioned to stay positive on the interval $(0,\infty)$, comes from the study of branching random walks with random environment in time (BRWRE). As usual, by the celebrated many-to-one lemma, one can relate BRWRE with RWRE, by analysing the behaviour of RWRE to solve some questions on BRWRE. For example, Mallein and Mi\l{}o{\'s} \cite{MM18,MM19} find the asymptotic behaviour of maximal position of BRWRE by studying the ballot theorem for the associated RWRE. Our attempt is to use the quenched harmonic function to construct the RWRE conditioned to stay positive and to investigate the convergence of derivative martingale and Seneta-Heyde type theorem for the BRWRE, which is ongoing work.



\subsection{Model}

Consider a model of random walk $S$ in a random environment $\xi$. An environment is given by a family $\xi=\{\xi_{n}\}_{n\in \mathbb{N}^*}$ of random variables defined on some probability space $\left(\Omega_{E},\mathscr{F}_{E},P\right)$, with values in the space of probability laws on $\mathbb{R}$ . In this paper, we assume that the environment $\{\xi_{n}\}_{n\in \mathbb{N}^*}$ are {\it stationary and ergodic}
random variables. Each realization of $\xi_{n}$ corresponds to a probability law $\eta_{n}$ on $\mathbb{R}$. When the environment $\xi=\{\xi_{n}\}_{n\in \mathbb{N}^*}$ is given, the process can be described as a time-inhomogeneous random walk as follows. Let $\{X_{n}\}_{n\in \mathbb{N}^*}$ be a sequence of independent real valued random variables such that $X_n$ has probability law  $\eta_n$. Set
$$S_{0}:=x\in\mathbb{R},~~~~ S_n:=S_0+\sum_{k=1}^{n}X_i,~~~~ n\geq1.$$
The process $\{S_n\}_{n\in \mathbb{N}}$ constructed above is a {\it random walk with random environment in time starting from $x$}. Let $\left(\Omega_{RW},\mathscr{F}_{RW},\mathbb{P}_{\xi}\right)$ be the probability space under which the random walk is defined when the environment $\xi$ is fixed, and $\mathscr{F}^{\xi}_{n}:=\sigma\{S_k;k\leq n\}$ be the natural filtration of the process $S$. The total probability space can be formulated as the product space $\left(\Omega_{E}\times\Omega_{RW},\mathscr{F}_{E}\otimes\mathscr{F}_{RW},\mathbb{P}\right)$, where $\mathbb{P}=P\otimes \mathbb{P}_{\xi}$ in the sense that for any non-negative measurable function $f$, we have
$$\int f\,d\mathbb{P}=\int_{\Omega_{E}}\left(\int_{\Omega_{RW}}f\left(\xi,\omega\right)d\mathbb{P}_{\xi}\right)dP.$$
Here, we call $\mathbb{P}_{\xi}$ {\it quenched probability} and $\mathbb{P}_{x}$ {\it annealed probability}. Note that quenched probability is the conditional probability of $\mathbb{P}$ given the environment $\xi$. The corresponding expectations with respect to $\mathbb{P}_{\xi}$ and $\mathbb{P}$ will be denoted by $\mathbb{E}_{\xi}$ and $\mathbb{E}$, respectively.

Observe that in this model, we sample the environment randomly on time-axis. In many other studies, the environment is sampled randomly in space, see  Zeitouni \cite{Zei04}, or space-time, cf. Rassoul-Agha and Sepp{\"a}l{\"a}inen \cite{RS05}. Note that in the quenched sense, our model is the sums of independent, not necessarily identically distributed random variables.



\subsection{Quenched harmonic function}

We shall always assume that
\begin{equation}\label{assumption}
	\begin{aligned}
		&\mathbb{E}_\xi(X_1)=0, ~~~~P\text{-a.s.};\\
		&\mathbb{E}(X^{2+\epsilon}_1)<\infty, ~~~~\text{for some}~~ \epsilon>0.
	\end{aligned}
\end{equation}
To get rid of the trivial case, we assume throughout this paper that $\mathbb{P}_\xi(X_1>0)>0, ~P\text{-a.s.}$.

For any $y\geq0$, denote by $\tau_{y}$ the first moment when $\{S_n\}$ enters the interval $(-\infty, -y]$: $$\tau_{y}:=\inf\left\{n>0: y+S_n\leq0\right\}.$$
Define $$U_n(\xi,y):=\mathbb{E}_\xi(y+S_n; \tau_{y}>n).$$ Let $\theta$ be the shift operator, i.e. $\theta\xi:=(\xi_2,\xi_3,\cdots)$. For $n\geq1$, $\theta^n\xi:=\theta(\theta^{n-1}\xi)$ with the convention that $\theta^0\xi:=\xi$.

Under the quenched probability, for almost each realization of $\xi$, the RWRE $\{S_n\}_{n\in \mathbb{N}}$ is a sum of independent but not necessarily identically distributed random variables, 
one loses the duality property. Therefore, we can not formulate the harmonic function by the underlying ladder process of the random walks. To get over this difficulty, firstly, followed the strong approximation (universality) approach in \cite{DSW18}, we will prove the existence of the limit $\lim\limits_{n\to\infty}U_{n}(\xi,y)=U(\xi,y):=-\mathbb{E}_\xi(S_{\tau_y})$ for almost each realization of $\xi$ in the following proposition. For completeness, we shall give the details of the proof for this proposition in next section and simplify the proof.

\begin{proposition}\label{Prop1}
	Assume that (\ref{assumption}) holds, $y\geq0$, then there exists a random variable $U(\xi,y)$ such that, for almost all $\xi$,
	$$\lim\limits_{n\to\infty}U_{n}(\xi,y)=U(\xi,y):=-\mathbb{E}_\xi(S_{\tau_y}).$$
\end{proposition}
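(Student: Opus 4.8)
The plan is to show that $\{U_n(\xi,y)\}_{n\ge 1}$ is a $\mathbb{P}_\xi$-martingale-like nonnegative sequence whose expectation stabilizes, and then upgrade convergence of means to convergence of the random variables via a uniform integrability / $L^1$-Cauchy argument driven by the moment assumption \eqref{assumption}. First I would record the basic algebraic identity: by conditioning on $\mathscr{F}^\xi_n$ and using $\mathbb{E}_\xi(X_{n+1})=0$ together with the fact that on $\{\tau_y>n\}$ we have $y+S_n>0$, one gets
\begin{equation*}
U_{n+1}(\xi,y)=\mathbb{E}_\xi(y+S_{n+1};\tau_y>n+1)=U_n(\xi,y)-\mathbb{E}_\xi(y+S_{n+1};\tau_y=n+1).
\end{equation*}
Since $y+S_{n+1}\le 0$ on $\{\tau_y=n+1\}$, the correction term is nonpositive, so $n\mapsto U_n(\xi,y)$ is nondecreasing and nonnegative; hence the pointwise limit $U(\xi,y):=\lim_n U_n(\xi,y)\in[0,\infty]$ exists trivially for every $\xi$. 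The real content is that this limit is finite a.s.\ and equals $-\mathbb{E}_\xi(S_{\tau_y})$, i.e.\ that $\sum_n \mathbb{E}_\xi(|y+S_{n+1}|;\tau_y=n+1)=-\mathbb{E}_\xi(y+S_{\tau_y})<\infty$ and that $\mathbb{E}_\xi(y+S_n;\tau_y>n)\to 0$.

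Next I would control the overshoot. Telescoping the identity above gives, for every $n$,
\begin{equation*}
U_n(\xi,y)=(y) - \mathbb{E}_\xi(y+S_{\tau_y};\tau_y\le n)= y+\mathbb{E}_\xi\big(-(y+S_{\tau_y});\tau_y\le n\big),
\end{equation*}
so finiteness of $U(\xi,y)$ is equivalent to $\mathbb{E}_\xi\big(-(y+S_{\tau_y})\big)<\infty$, the expected overshoot below $0$. Here is where I would invoke the strong approximation (universality) machinery from Denisov, Sakhanenko and Wachtel \cite{DSW18}: under \eqref{assumption}, for $P$-a.e.\ $\xi$ the quenched walk $\{S_n\}$ is a sum of independent centered increments with $\mathbb{E}(X_1^{2+\epsilon})<\infty$, and by the ergodic theorem the partial variances $\sum_{k\le n}\mathbb{E}_\xi(X_k^2)$ grow linearly in $n$ for $P$-a.e.\ $\xi$, with a uniform control on the $(2+\epsilon)$-moments along the trajectory (again via the ergodic theorem, after truncating; one needs $\sum_{k} \mathbb{E}_\xi(|X_k|^{2+\epsilon})/k^{1+\epsilon/2}<\infty$-type bounds, which hold a.s.). This places $\{S_n\}$ in the regime covered by \cite{DSW18}, whence the expected overshoot is finite and $U_n(\xi,y)-U(\xi,y)=\mathbb{E}_\xi(y+S_n;\tau_y>n)\to 0$ — the latter because $\mathbb{E}_\xi(y+S_n;\tau_y>n)\le \mathbb{E}_\xi(y+S_n;\tau_y>n)$ is bounded by the tail of a convergent series plus a term that vanishes by the local CLT estimate $\mathbb{P}_\xi(\tau_y>n)=O(n^{-1/2})$ combined with $\mathbb{E}_\xi((y+S_n)^2;\tau_y>n)=O(n)$ and Cauchy–Schwarz.

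Concretely, the steps in order would be: (1) establish the monotonicity/telescoping identity and reduce the claim to finiteness of the expected overshoot and to $\mathbb{E}_\xi(y+S_n;\tau_y>n)\to 0$; (2) use stationarity and ergodicity of $\xi$ plus \eqref{assumption} to verify, for $P$-a.e.\ $\xi$, the hypotheses needed to apply the strong-approximation results of \cite{DSW18} to the quenched walk (linear growth of variance, Lindeberg/$(2+\epsilon)$-moment control along the path); (3) quote from \cite{DSW18} the asymptotics $\mathbb{P}_\xi(\tau_y>n)\sim c(\xi,y) n^{-1/2}$ and the bound on $\mathbb{E}_\xi((y+S_n)^2;\tau_y>n)$, and combine them via Cauchy–Schwarz to kill the remainder term; (4) conclude $U(\xi,y)=-\mathbb{E}_\xi(S_{\tau_y})<\infty$ for $P$-a.e.\ $\xi$ and all $y\ge 0$ (the exceptional null set can be taken independent of $y$ by monotonicity in $y$ and a rational approximation). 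The main obstacle I anticipate is step (2)–(3): making the passage to the quenched walk genuinely rigorous, i.e.\ checking that the almost-sure realization of $\xi$ produces an increment sequence satisfying the (somewhat delicate, non-uniform) moment and regularity conditions under which the overshoot and barrier estimates of \cite{DSW18} are valid, and doing so with a single null set that works simultaneously for all $y\ge 0$; the telescoping reduction in step (1) and the Cauchy–Schwarz cleanup in step (3) are routine by comparison.
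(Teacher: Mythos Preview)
Your overall strategy matches the paper's: both reduce the claim to (i) $\tau_y<\infty$ $\mathbb{P}_\xi$-a.s.\ and (ii) integrability of the overshoot $-(y+S_{\tau_y})$ under $\mathbb{P}_\xi$, via the optional-stopping/telescoping identity $U_n(\xi,y)=y\,\mathbb{P}_\xi(\tau_y>n)-\mathbb{E}_\xi(S_{\tau_y};\tau_y\le n)$, and both obtain (ii) by the bootstrap argument of \cite{DSW18} (which the paper reproduces in detail as Lemmas~\ref{Lem2.2.1}--\ref{Lem2.2} rather than citing as a black box). Your monotonicity observation for $U_n$ is correct and is exactly the submartingale remark in the paper's proof of claim~1 of Theorem~\ref{Th1}.

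However, there is a genuine confusion you should fix. You write that one must show ``$\mathbb{E}_\xi(y+S_n;\tau_y>n)\to 0$'' and that ``$U_n(\xi,y)-U(\xi,y)=\mathbb{E}_\xi(y+S_n;\tau_y>n)$''. But $\mathbb{E}_\xi(y+S_n;\tau_y>n)$ \emph{is} $U_n(\xi,y)$ by definition, so the first assertion would force $U\equiv 0$ and the second reads $U_n-U=U_n$. In fact nothing further is needed once (i) and (ii) hold: from your own telescoping, $U_n=y+\mathbb{E}_\xi\bigl(-(y+S_{\tau_y});\tau_y\le n\bigr)\uparrow y+\mathbb{E}_\xi\bigl(-(y+S_{\tau_y})\bigr)=-\mathbb{E}_\xi(S_{\tau_y})$ by monotone convergence. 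Consequently your step~(3) --- quoting the precise asymptotic $\mathbb{P}_\xi(\tau_y>n)\sim c\,n^{-1/2}$ and a second-moment bound, then applying Cauchy--Schwarz --- is both unnecessary and circular (that asymptotic is claim~3 of Corollary~\ref{Coro1}, which already relies on Proposition~\ref{Prop1}); moreover the Cauchy--Schwarz bound you sketch yields only $O(n^{1/2})\cdot O(n^{-1/4})=O(n^{1/4})$, not $o(1)$. Drop step~(3) entirely. You should also make (i) explicit; the paper handles it by strong approximation (Lemma~\ref{Lem2.1}).
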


Secondly, note that the limiting function $U(\xi,\cdot)$ is random and measurable with respect to $\sigma\left(\xi\right):=\sigma\left(\xi_{1},\xi_2,\cdots\right)$. It depends on the whole sequence $\xi=\{\xi_{n}\}_{n\in \mathbb{N}^*}$ of random variables. By shifting the environment, we obtain a family $\left\{U\left(\theta^n\xi,\cdot\right)\right\}_{n\in \mathbb{N}}$ of functions. Based on the observation that the function $U\left(\xi,\cdot\right)$ is shifted invariant for the semigroup of the random walk killed when it first enters $(-\infty,0]$ in the quenched sense, we will prove that $U\left(\xi,\cdot\right)$ can be served as a positive quenched harmonic function which plays an essential role in defining the quenched random walks conditioned to stay positive. The ``usual" properties of the harmonic function are established as well.
\begin{theorem}\label{Th1}
	Assume that (\ref{assumption}) holds, $U(\xi,y)$ is defined as in Proposition \ref{Prop1}, then	
	\begin{item}
		\item[1.]
		$U$ is $\sigma$$\left(\xi\right)$-measurable and satisfies the quenched harmonic property:  $$U(\xi,y)=\mathbb{E}_{\xi}\left[U(\theta \xi,y+S_{1});\tau_{y}>1\right],~~~~P\text{-a.s.}.$$
		\item[2.]
		$U(\xi,y)$ is positive, non-decreasing in $y$ and
		\begin{equation}\label{asymptotic behaviour of harmonic function 1}
			\lim\limits_{y\to\infty}\frac{U(\xi,y)}{y}=1,~~~~P\text{-a.s.}.
		\end{equation}
	\end{item}
\end{theorem}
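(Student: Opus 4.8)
The plan is to derive all three assertions from the defining relation $U(\xi,y)=\lim_n U_n(\xi,y)$ established in Proposition \ref{Prop1}, together with a one-step decomposition of $U_n$. First, $\sigma(\xi)$-measurability is immediate: each $U_n(\xi,y)=\mathbb{E}_\xi(y+S_n;\tau_y>n)$ is a measurable function of the first $n$ environment coordinates (the law of $(X_1,\dots,X_n)$ is determined by $(\xi_1,\dots,\xi_n)$), hence $U(\xi,y)$, being a pointwise a.s.\ limit, is $\sigma(\xi)$-measurable. For the harmonic identity, I would condition on $X_1=S_1$ and use the Markov property under the quenched law: for $n\ge 1$,
\begin{equation*}
	U_{n}(\xi,y)=\mathbb{E}_{\xi}\!\left[\,y+S_n;\ \tau_y>n\,\right]=\mathbb{E}_{\xi}\!\left[\,\mathbf{1}_{\{y+S_1>0\}}\,U_{n-1}(\theta\xi,\ y+S_1)\,\right]=\mathbb{E}_{\xi}\!\left[\,U_{n-1}(\theta\xi,\ y+S_1);\ \tau_y>1\,\right],
\end{equation*}
where the inner expectation is with respect to $(X_2,X_3,\dots)$, which under $\mathbb{P}_\xi$ has the law governed by $\theta\xi$. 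Letting $n\to\infty$ and passing the limit inside the expectation then yields $U(\xi,y)=\mathbb{E}_{\xi}[U(\theta\xi,y+S_1);\tau_y>1]$ for $P$-a.s.\ $\xi$ (the null set being the countable union over rational $y$, extended to all $y\ge0$ by monotonicity, cf.\ part 2). Justifying the interchange of limit and expectation is the first point requiring care: I would control it by dominated or monotone convergence, using the uniform integrability / moment bounds that already appear in the proof of Proposition \ref{Prop1} (an $L^{1}$-type bound on $y+S_n$ on $\{\tau_y>n\}$ uniform in $n$), so that $U_{n-1}(\theta\xi,\cdot)$ is dominated by an integrable envelope.

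For part 2, positivity follows from $\mathbb{P}_\xi(X_1>0)>0$ $P$-a.s.: on this event $y+S_1>y\ge 0$, so $\{\tau_y>1\}$ has positive quenched probability, and iterating the harmonic identity (or directly using $U(\xi,y)=-\mathbb{E}_\xi(S_{\tau_y})\ge \mathbb{E}_\xi(y+S_n;\tau_y>n)>0$ for suitable $n$) gives $U(\xi,y)>0$. Monotonicity in $y$ is inherited from each $U_n$: if $0\le y\le y'$ then $\tau_y\le\tau_{y'}$ pathwise, and on $\{\tau_y>n\}$ one has $0<y+S_n\le y'+S_n$ with $\{\tau_y>n\}\subseteq\{\tau_{y'}>n\}$, hence $U_n(\xi,y)\le U_n(\xi,y')$; the inequality survives the limit.

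The asymptotics \eqref{asymptotic behaviour of harmonic function 1} is the main obstacle and where I expect to spend most of the effort. The upper bound $\limsup_{y\to\infty}U(\xi,y)/y\le 1$ is easy: $U(\xi,y)=-\mathbb{E}_\xi(S_{\tau_y})$ and by optional stopping $\mathbb{E}_\xi(S_{n\wedge\tau_y})=0$, so $U(\xi,y)=\mathbb{E}_\xi(y+S_n;\tau_y>n)-y\,\mathbb{P}_\xi(\tau_y>n)+o(1)\le \mathbb{E}_\xi(y+S_n;\tau_y>n)$, and a Wald-type estimate together with the overshoot having finite quenched mean (from the $(2+\epsilon)$-moment assumption, via a quenched Borel--Cantelli / truncation argument) gives $-\mathbb{E}_\xi(S_{\tau_y})\le y+C(\xi)$ with $C(\xi)<\infty$ $P$-a.s. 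The lower bound $\liminf_{y\to\infty}U(\xi,y)/y\ge 1$ requires showing $\mathbb{P}_\xi(\tau_y>n)\to\ 0$ slowly enough, equivalently that $y\,\mathbb{P}_\xi(\tau_y=\infty)$ does not occur in a degenerate way and that the walk starting high up stays positive with probability tending to $1$ as $y\to\infty$; concretely I would use $U(\xi,y)\ge y\,\mathbb{P}_\xi(\tau_y>n)-\mathbb{E}_\xi(|S_n|;\tau_y>n)$ and let $y\to\infty$ first (so $\mathbb{P}_\xi(\tau_y>n)\to1$ by the CLT scaling for the quenched walk, since a walk started at height $y\gg\sqrt n$ cannot reach $0$ by time $n$ with the error controlled by Chebyshev using $\mathbb{E}_\xi(X_1)=0$ and the variance bound), then $n\to\infty$, combining with the $o(\sqrt n)$ control of $\mathbb{E}_\xi(|S_n|;\tau_y>n)$ uniform in $y$. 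The delicate quenched point throughout is that the constants and variance proxies depend on $\xi$; I would invoke the ergodic theorem to pass from $\mathbb{E}(X_1^2)$-type annealed bounds to $P$-a.s.\ Cesàro control of $\sum_{k\le n}\mathrm{Var}_\xi(X_k)\sim \sigma^2 n$ and of the truncated tails, which is exactly the mechanism already used for Proposition \ref{Prop1}.
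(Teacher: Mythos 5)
Your treatment of part 1 (one-step Markov decomposition of $U_{n+1}$ through $\theta\xi$, then passing to the limit by monotone/dominated convergence) and of positivity and monotonicity in part 2 matches the paper's argument; for the limit interchange the paper simply observes that $(y+S_n)1_{\{\tau_y>n\}}$ is a submartingale, so $U_n$ is non-decreasing in $n$ and monotone convergence applies, which is cleaner than hunting for an integrable envelope. Note also that you have the difficulty in \eqref{asymptotic behaviour of harmonic function 1} backwards: the lower bound is immediate, since $y+S_{\tau_y}\le 0$ gives $U(\xi,y)=-\mathbb{E}_\xi(S_{\tau_y})=y+\mathbb{E}_\xi\left[-(y+S_{\tau_y})\right]\ge y$ (equivalently, from the optional stopping identity used in the paper), with no CLT or two-limit argument needed.

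The genuine gap is in your upper bound. You assert that ``a Wald-type estimate together with the overshoot having finite quenched mean'' yields $-\mathbb{E}_\xi(S_{\tau_y})\le y+C(\xi)$ with $C(\xi)$ independent of $y$, but no mechanism is given for a bound on the expected overshoot $\mathbb{E}_\xi\left[-(y+S_{\tau_y})\right]$ that is uniform (or even sublinear) in $y$. In the classical i.i.d.\ case such uniformity comes from renewal theory for the ladder heights, i.e.\ exactly the duality that the paper stresses is unavailable in the quenched time-inhomogeneous setting; the $(2+\epsilon)$-moment assumption plus Borel--Cantelli/truncation controls individual increments but not the increment $X_{\tau_y}$ sampled at the $y$-dependent random time $\tau_y$. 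The bound one gets from the Proposition~\ref{Prop1} machinery (Lemmas~\ref{Lem2.2.1}--\ref{Lem2.2}) is of the form $-\mathbb{E}_\xi(S_{\tau_y})\le 2y+2C_1(\xi,y)$, which only gives $\limsup_y U(\xi,y)/y\le 2$. This is precisely why the paper inserts a dedicated lemma constructing, for a.e.\ $\xi$ and suitable $C,K$ (depending on $\xi$), the non-negative supermartingale $f(y+S_{n\wedge\tau_y})$ with $f(x)=(C+x)^{1-\frac{1}{2K+1}}$ for $x>0$ and $f(x)=|x|$ for $x\le 0$; Fatou then gives $\mathbb{E}_\xi\lvert y+S_{\tau_y}\rvert\le (C+y)^{1-\frac{1}{2K+1}}=o(y)$, which is what actually closes \eqref{asymptotic behaviour of harmonic function 1}. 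Without this (or some substitute quantitative overshoot estimate), your proof of part 2's asymptotics does not go through.
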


\begin{corollary}\label{Coro1}
	Under the assumption of Theorem \ref{Th1}, for almost all $\xi$, we have
	\begin{item}
		\item[1.]
		$\left\{U(\theta^n\xi,y+S_n)1_{\left\{\tau_{y}>n\right\}}\right\}_{n\in \mathbb{N^*}}$ is a martingale with respect to filtration $\left(\sigma\left(\xi\right)\otimes\mathscr{F}^{\xi}_{n}\right)$ under $\mathbb{P}_\xi.$
		\item[2.]
		For any $y_n\geq0$ with $y_n\to \infty$ as $n\to \infty$, 
		\begin{equation}\label{asymptotic behaviour of harmonic function 2}
			\lim\limits_{n\to\infty}\frac{U(\theta^n\xi,y_n)}{y_n}=1.
		\end{equation}
		\item[3.]
		The  quenched probability of the random walks  to stay positive is given by
		\begin{equation}\label{asymptotic behaviour of RW stay positive}
			\mathbb{P}_\xi(\tau_y>n)\sim \frac{\sqrt{2}U(\xi,y)}{\sqrt{\pi n}\sigma}, ~~~~\text{as}~n\to\infty,
		\end{equation}where $\sigma^2:=\mathbb{E}(X^2_1)$.
	\end{item}
\end{corollary}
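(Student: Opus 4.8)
I would treat the three assertions separately; only items~2 and~3 require real work, and they are two facets of the same strong-approximation argument.

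\textbf{Part 1} is the direct consequence of the quenched harmonic identity Theorem~\ref{Th1}(1). Put $M_n:=U(\theta^n\xi,y+S_n)\,1_{\{\tau_y>n\}}$; it is nonnegative by Theorem~\ref{Th1}(2) and adapted to $\mathscr{G}_n:=\sigma(\xi)\otimes\mathscr{F}^{\xi}_n$. Conditioning on $\mathscr{G}_n$ and using that, on $\{\tau_y>n\}$, one has $y+S_n>0$ while the next increment $X_{n+1}$ has law the first coordinate of $\theta^n\xi$ and is independent of $\mathscr{F}^{\xi}_n$,
$$\mathbb{E}_\xi[M_{n+1}\mid\mathscr{G}_n]=1_{\{\tau_y>n\}}\,\mathbb{E}_{\theta^n\xi}\big[U(\theta^{n+1}\xi,(y+S_n)+S_1);\ \tau_{y+S_n}>1\big]=1_{\{\tau_y>n\}}\,U(\theta^n\xi,y+S_n)=M_n,$$
the middle equality being Theorem~\ref{Th1}(1) for the environment $\theta^n\xi$ and the offset $y+S_n$ (valid simultaneously for all $n$ and all offsets for $P$-a.e.\ $\xi$, since $\theta$ preserves $P$). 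Integrability and the constancy $\mathbb{E}_\xi M_n=U(\xi,y)$ I would obtain by truncation: with $U_N$ as in Proposition~\ref{Prop1} and $U_0(\xi,\cdot)=\mathrm{id}$, the recursion $U_{N+1}(\xi,y)=\mathbb{E}_\xi[U_N(\theta\xi,y+S_1);\tau_y>1]$ makes $k\mapsto U_{N-k}(\theta^k\xi,y+S_k)1_{\{\tau_y>k\}}$ $(0\le k\le N)$ a genuine integrable martingale of $\mathbb{E}_\xi$-value $U_N(\xi,y)$; letting $N\to\infty$ and invoking $U_{N-k}\uparrow U$ (Proposition~\ref{Prop1}) together with monotone convergence delivers all the claims, in particular $\mathbb{E}_\xi M_k=\lim_N U_N(\xi,y)=U(\xi,y)<\infty$.

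\textbf{Part 2.} The lower bound is free: optional stopping for the $\mathbb{P}_\xi$-martingale $S$ at the bounded time $m\wedge\tau_y$ gives $U_m(\xi,y)=y+\mathbb{E}_\xi[(y+S_{\tau_y})^-;\tau_y\le m]\ge y$, and since the quenched walk oscillates under \eqref{assumption} (so $\tau_y<\infty$ $\mathbb{P}_\xi$-a.s.) letting $m\to\infty$ yields $U(\xi,y)=y+\mathbb{E}_\xi[(y+S_{\tau_y})^-]\ge y$; hence $U(\theta^n\xi,y_n)/y_n\ge1$. For the matching upper bound one must show the expected undershoot $\mathbb{E}_{\theta^n\xi}[(y_n+S_{\tau_{y_n}})^-]$ is $o(y_n)$ uniformly along almost every orbit. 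Because $y+S_{\tau_y-1}\ge0$, this undershoot is dominated by the size $|X_{\tau_y}|$ of the crossing jump; splitting that jump at level $\varepsilon y_n$ and using that $\{\tau_{y_n}\ge k\}$ is independent of the $k$-th increment in environment $\theta^n\xi$, one is reduced to bounding $\varepsilon y_n+\sum_{k\ge1}\mathbb{E}_{\theta^n\xi}[|X_k|;|X_k|>\varepsilon y_n]\,\mathbb{P}_{\theta^n\xi}(\tau_{y_n}\ge k)$, where the truncated moments are $\le(\varepsilon y_n)^{-(1+\epsilon)}\mathbb{E}_{\theta^n\xi}|X_k|^{2+\epsilon}$ by \eqref{assumption}; combined with a tail bound $\mathbb{P}_{\theta^n\xi}(\tau_{y_n}\ge k)\le C(1+y_n)/\sqrt k$ (available from the strong approximation, or a direct second-moment estimate) and the ergodic control of the partial sums $\sum_{j\le K}\mathbb{E}_\xi|X_j|^{2+\epsilon}$, this sum is $o(y_n)$, and letting $\varepsilon\downarrow0$ finishes. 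Equivalently, \eqref{asymptotic behaviour of harmonic function 2} is the ``$y\to\infty$'' case of Part~3: for $y$ large the absorbing barrier acts at macroscopic scale, so the strong approximation of Proposition~\ref{Prop1}, after rescaling by $y_n$ and $y_n^2$, compares $U_{y_n^2}(\theta^n\xi,y_n)/y_n$ with the Brownian value $\mathbb{E}[(1+\sigma B_1);\min_{t\le1}(1+\sigma B_t)>0]=1$ (by optional stopping), and $U(\theta^n\xi,y_n)=\lim_m U_m(\theta^n\xi,y_n)$ of Proposition~\ref{Prop1} upgrades this to the full limit.

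\textbf{Part 3} is the substantial point, and I would follow the strong-approximation (universality) scheme of \cite{DSW18} transplanted to the quenched level, reusing the Brownian coupling already built in the proof of Proposition~\ref{Prop1}. Fix $y$, pick an intermediate time $m=m_n\to\infty$ with $m_n=o(n)$ at which the coupling is effective, and condition on $\mathscr{F}^{\xi}_m$:
$$\mathbb{P}_\xi(\tau_y>n)=\mathbb{E}_\xi\Big[1_{\{\tau_y>m\}}\,\mathbb{P}_{\theta^m\xi}\big(z+S_k>0,\ 1\le k\le n-m\big)\big|_{z=y+S_m}\Big].$$
On the ``bulk'' event $\{\tau_y>m,\ \delta_1(m)\sqrt m\le y+S_m\le\delta_2(m)\sqrt m\}$ with $\delta_1(m)\downarrow0$, $\delta_2(m)\uparrow\infty$ suitably, the level $z=y+S_m$ is large but $o(\sqrt{n-m})$, so the strong approximation in the environment $\theta^m\xi$ compares the inner survival probability with $\mathbb{P}(\min_{t\le\sigma^2(n-m)}B_t>-z)=\mathbb{P}(|B_{\sigma^2(n-m)}|<z)\sim\frac{\sqrt{2}\,z}{\sigma\sqrt{\pi n}}$, the relative error being negligible near the barrier by the nested excursion analysis of \cite{DSW18}. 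The atypical parts are error terms: for $y+S_m<\delta_1\sqrt m$ one bounds the inner probability by $O(z/\sqrt n)$ and uses $\mathbb{E}_\xi[(y+S_m)1_{\{\tau_y>m,\,y+S_m<\delta_1\sqrt m\}}]\le\delta_1\sqrt m\,\mathbb{P}_\xi(\tau_y>m)\lesssim\delta_1(1+y)$, which vanishes as $\delta_1(m)\downarrow0$; for $y+S_m>\delta_2\sqrt m$ one uses $\mathbb{E}_\xi[(y+S_m)1_{\{y+S_m>\delta_2\sqrt m\}}]\to0$ via \eqref{assumption} and Rosenthal's inequality for the quenched increments. Collecting the estimates,
$$\sqrt n\,\mathbb{P}_\xi(\tau_y>n)\ \longrightarrow\ \frac{\sqrt2}{\sigma\sqrt\pi}\,\lim_{m\to\infty}\mathbb{E}_\xi\big[(y+S_m)1_{\{\tau_y>m\}}\big]=\frac{\sqrt2}{\sigma\sqrt\pi}\lim_{m\to\infty}U_m(\xi,y)=\frac{\sqrt2\,U(\xi,y)}{\sigma\sqrt\pi}$$
by Proposition~\ref{Prop1}, which is \eqref{asymptotic behaviour of RW stay positive}; the constant $\sqrt2/(\sigma\sqrt\pi)$ is the one from the reflection principle for Brownian motion.

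\textbf{Main obstacle.} The difficulty is concentrated in Part~3 (and the uniform form of Part~2): obtaining quenched strong-approximation estimates sharp enough \emph{near} the absorbing barrier under only the annealed $(2+\epsilon)$-moment condition \eqref{assumption}, and — since the environment is re-shifted at every scale — making all error terms uniform along almost every orbit $(\theta^j\xi)_{j\ge0}$. This is precisely where the stationarity and ergodicity of $\xi$, through the ergodic theorem, enter, replacing the environment-dependent quantities $\mathbb{E}_{\theta^j\xi}(\cdot)$ by their $P$-averages.
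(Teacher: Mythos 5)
Your Parts 1 and 3 are fine and essentially follow the paper: Part 1 is the direct consequence of the quenched harmonic identity (your truncated-martingale argument for integrability is a harmless elaboration of the recursion $U_{N+1}(\xi,y)=\mathbb{E}_\xi[U_N(\theta\xi,y+S_1);\tau_y>1]$ already established in the proof of Theorem \ref{Th1}), and Part 3 is, as in the paper, a transplant of Theorem 1 of \cite{DSW18}: one checks that for almost every $\xi$ the hypotheses there hold and that $B_n\sim\sqrt n\,\sigma$ $P$-a.s., rather than re-running the whole bulk/atypical decomposition, but the content is the same.

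Part 2 is where there is a genuine gap, and it is exactly the point where the paper does something different. Your quantitative route requires the undershoot bound to hold \emph{uniformly along the orbit} $(\theta^n\xi)_n$, and the two ingredients you invoke do not deliver this. First, the tail bound $\mathbb{P}_{\theta^n\xi}(\tau_{y_n}\ge k)\le C(1+y_n)/\sqrt k$ with $C$ independent of $n$ is not available: the quenched version (Lemma \ref{Lem2.2.1}) only holds for $k$ beyond a threshold $N_1$ and with constants ($C_1$, $N_2$) depending on the realization, and for the shifted environments $\theta^n\xi$ these thresholds vary with $n$ with no uniform control under (\ref{assumption}); the ergodic theorem only controls Ces\`aro averages of $\mathbb{E}_{\theta^j\xi}$-quantities, not suprema along the orbit. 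Second, even granting such a bound, your series diverges: with the fixed truncation level $\varepsilon y_n$, the terms $\mathbb{E}_{\theta^n\xi}\left[|X_k|;|X_k|>\varepsilon y_n\right]$ do not decay in $k$ (their orbit average is a fixed positive number), so $\sum_{k\ge1}\mathbb{E}_{\theta^n\xi}\left[|X_k|;|X_k|>\varepsilon y_n\right]k^{-1/2}$ is infinite; one needs a cutoff growing with $k$, as in the paper's $g_i=\sqrt i/\log^{1+\delta} i$ device (Lemmas \ref{Lem2.2.2}--\ref{Lem2.2}), and even that only yields the fixed-$\xi$ asymptotics (\ref{asymptotic behaviour of harmonic function 1}), not (\ref{asymptotic behaviour of harmonic function 2}). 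Your fallback ``this is the $y\to\infty$ case of Part 3'' conflates two different limits: Part 3 is $n\to\infty$ with the environment $\xi$ fixed, whereas (\ref{asymptotic behaviour of harmonic function 2}) shifts the environment at every step, and the strong-approximation errors $\pi_m(\theta^n\xi)$ again lack uniformity in $n$. The paper avoids all of this with a soft argument: by stationarity, (\ref{asymptotic behaviour of harmonic function 1}) holds for each $\theta^n\xi$; Egorov's theorem upgrades it to uniform convergence on a set of positive $P$-measure, which forces $P\left(\lim_n U(\theta^n\xi,y_n)/y_n=1\right)>0$; and since this event is $\theta$-invariant, ergodicity gives probability one. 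That invariance-plus-ergodicity step is the missing idea in your proposal, and without it (or a genuinely new uniform quantitative estimate) your Part 2 does not close.
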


\begin{remark}\label{Rem1}
	
	\
	
	\begin{itemize}
		\item[1.]
		For the case of non-random environment, one important tool to analyse the asymptotic behaviour of random walks conditioned to stay positive is the Wiener-Hopf factorisation, does not require any moment conditions on the random walks. For any oscillating random walks, a widely used fact is that the renewal function $R(y)$ associated with ladder heights processes is harmonic, we refer to the standard book of Feller \cite{Fel71}. These methods essentially rely on the so-called duality principle which, unfortunately, failed in the quenched setting. To get over this difficulty, firstly,  we  prove the existence of the limit $\lim\limits_{n\to\infty}U_{n}(\xi,y)=U(\xi,y)$ followed the strong approximation  approach  in \cite{DSW18}. Then, based on the observation that the function $U\left(\xi,\cdot\right)$ is shifted invariant for the semigroup of the random walk killed when it first enters $(-\infty,0]$ in the quenched sense, we prove that $U\left(\xi,\cdot\right)$ can be served as a positive quenched harmonic function. This is one of the novelty of the present paper.

		\item[2.]
		The quenched harmonic function $U(\xi,y)$ plays a key role. Thanks to the claim 1 of Corollary \ref{Coro1}, we can define the  RWRE conditioned to stay positive on the interval $(0,\infty)$, whose law is given by Doob's {\it h}-transform as in (\ref{P+}).


		\item[3.]
		It is worth while to note that $U(\xi,y)$ depends on the whole sequence $\xi=\{\xi_{n}\}_{n\in \mathbb{N}^*}$ of random variables. Although $\left\{U(\theta^n\xi,y+S_n)1_{\left\{\tau_{y}>n\right\}}\right\}_{n\in \mathbb{N^*}}$ is a martingale with respect to filtration $\left(\sigma\left(\xi\right)\otimes\mathscr{F}^{\xi}_{n}\right)$ under $\mathbb{P}_\xi$, it is not a martingale with respect to the filtrations  $\left(\sigma\left\{\xi_1,\cdots,\xi_n\right\}\otimes\mathscr{F}^{\xi}_{n}\right)$ under the annealed law $\mathbb{P}$. The quenched harmonic function $U(\xi,y)$ can not be obtained in the annealed probability even for the independent and identically distributed environment. Some other interesting properties related the  quenched harmonic function $U(\xi,y)$ see section 2 of \cite{HL22}.


		\item[4.]
		For time-homogeneous Markov walks, Grama et al. \cite{GLL18} obtained a harmonic function in their Theorem 2.2. However, our Theorem \ref{Th1} is not included in their results, since our random walks is time-inhomogeneous given the environment. Indeed, in the proof of propositions \ref{Prop1}, \ref{Th2} and claim 3 of Corollary \ref{Coro1}, we are benefited from Denisov et al. \cite{DSW18}, where the time-inhomogeneous random walks has been investigated.
	\end{itemize}
\end{remark}

\subsection{Quenched invariance principle}


The invariance principles for random walks conditioned to stay positive has been treated by many authors.
Iglehart \cite{Igl74} and Bolthausen \cite{Bol76} proved invariance principles for random walks conditioned to stay positive over a finite time interval: the law of the rescaled process $S_{(n\cdot)}$ conditioned to stay positive on the interval $(0,n]$, converges weekly to the law of the Brownian motion conditioned to stay positive on the interval $(0,1]$ (the law of the Brownian meander). While Bryn-Jones and Doney \cite{BD06} consider the case when the random walk is conditioned to stay positive over the infinite interval $\left(0,\infty\right)$: the rescaled process $S_{(n\cdot)}$ conditioned to stay positive on the interval $(0,\infty)$ (defined with the help of the harmonic function), converges in law to the Bessel process, which is the Brownian motion conditioned to stay positive on the interval $(0,\infty)$ (defined with the help of the corresponding harmonic function). In the latter case, the harmonic function plays a very important role in constructing these conditioned processes. For the stable law case, Doney \cite{Don85} and Caravenna and Chaumont \cite{CC08} considered the corresponding invariance principles respectively.

Now we are interested to investigate the quenched invariance principles for random walks with random environment in time conditioned to stay positive. In the ``Brownian meander" case, the quenched invariance principles followed directly from the results of Denisov, Sakhanenko and Wachtel \cite{DSW18} for random walks with independent but non-identically distributed increments, by checking the conditions there for almost each realization of $\xi$, see Theorem \ref{Th2} below. Our attention is focused on the ``Bessel process" case, the Brownian motion conditioned to stay positive on the interval $(0,\infty)$ is defined with the help of the corresponding harmonic function. The method of proof adapts a continuity argument used in Caravenna and Chaumont \cite{CC08} (originally comes from Bolthausen \cite{Bol76}) for the case of non-random environment. Avoiding calculating the finite-dimensional distributions and proving tightness, we follow a straightforward technique by exploiting the absolute continuity relation between the process conditioned to stay positive and the corresponding meander process, see Section 3.

\

To formulate the invariance principle more clearly, we use the terminology of canonical processes. We denote by $\Omega:=C[0,\infty)$ the space  of real valued continuous functions on the half-line $\mathbb{R}_+:=[0,\infty)$ endowed with the uniform topology and by $W:=\{W_t\}_{t\in\mathbb{R}_+}$ the coordinate process defined on this space:
$$\Omega\ni \omega \longmapsto W_t(\omega):=\omega_{t}.$$
Let $\mathbf{P}_{x}$ be the law on $\Omega$ of a standard Brownian motion started at $x$, i.e. $\mathbf{P}_{x}(W_0=x)=1$ and for simplicity set $\mathbf{P}:=\mathbf{P}_{0}$. Let $\mathscr{F}_{t}:=\sigma\{W_t;s\leq t\}$ be the natural filtration of the process $W$. We introduce the space $\Omega_t:=C[0,t]$ as the restricted of $\Omega$ to the time interval $[0,t]$.

Given the environment $\xi$, with an abuse of notation, we denote by $\Omega_{RW}:=\mathbb{R}^{\mathbb{N}}$ the space of discrete trajectories and by $S:=\{S_n\}_{n\in \mathbb{N}}$ the coordinate process defined on this space:
$$\Omega_{RW}\ni \omega^{RW} \longmapsto S_n(\omega^{RW}):=\omega^{RW}_{n}.$$
Let $\mathbb{P}_{\xi,~y}$ be the law on $\Omega_{RW}$ of a quenched random walk started at $y$, i.e. $\mathbb{P}_{\xi,~y}(S_0=y)=1$ and set $\mathbb{P}_{\xi}:=\mathbb{P}_{\xi,~0}$. Note that under $\mathbb{P}_{\xi,~y}$ the variables $\{S_n-S_{n-1}\}_{n\in \mathbb{N}^*}$ are independent but not necessarily identically distributed.


\subsubsection{``Brownian meander" case: QIP for random walk conditioned to stay positive up to time $N$}

We denote by $\mathbf{P}^{(m)}$ the law on $\Omega_1$ of the Brownian meander, which can be defined more explicitly as the limiting distribution:
\begin{equation}\label{Prob of Brownian meander}
	\mathbf{P}^{(m)}(\cdot):=\lim_{x\downarrow0}\mathbf{P}_x(\cdot \mid\underline{W}_1 >0),
\end{equation} where $\underline{W}_t:=\inf_{0\leq s\leq t}W_s$, see Theorem 2.1 in Durrett, Iglehart and Miller \cite{DIM77}. This clarifies that the Brownian meander is a Brownian motion conditioned to stay positive on the interval (0,1].

Similarly, we denote by $\mathbb{P}_{\xi}^{(m),N}$ the law on $\Omega_{RW}$ of the random walk conditioned to stay positive up to time $N$:
\begin{equation}\label{Prob of RW stay positive up to N}
	\mathbb{P}_{\xi}^{(m),N}(\cdot):=\mathbb{P}_{\xi}(\cdot\mid \tau_0>N).
\end{equation}

Define the rescaling mapping $\phi_N:\Omega_{RW}\longrightarrow \Omega$ as $$\Omega_{RW}\ni \omega^{RW} \longmapsto \left(\phi_N(\omega^{RW})\right)(t):=\frac{\omega^{RW}_{(Nt)}}{\sqrt{N}\sigma}, ~~~~t\in[0,\infty),$$ where
$$
\omega^{RW}_{(u)}:=\begin{cases}
	\omega^{RW}_{(k)}, & u=k\in \mathbb{N};\\
	\text{linear interpolation between~}\omega^{RW}_{(k)} \text{and~} \omega^{RW}_{(k+1)} , & u\in[k,k+1].
\end{cases}
$$
We introduce the rescaled meander measure on $\Omega_1$ (here $\phi_N$ is to be understood as a mapping from $\Omega_{RW}$ to $\Omega_1$):
\begin{equation}\label{rescaled meander measure on Omega_1}
	\mathbf{P}_{\xi}^{(m),N}:=\mathbb{P}_{\xi}^{(m),N}\circ (\phi_N)^{-1}.
\end{equation}

We have the following quenched invariance principle for the meander.
\begin{proposition}\label{Th2}
		Assume that (\ref{assumption}) holds, then for almost all $\xi$, we have
		\begin{equation}\label{QIP for meander}
			\mathbf{P}_{\xi}^{(m),N}\Longrightarrow \mathbf{P}^{(m)},~~~~\text{as}~~ N\to \infty,
		\end{equation} where $\Longrightarrow$ means convergence in law.
\end{proposition}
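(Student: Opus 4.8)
The plan is to reduce the quenched statement to a known annealed/deterministic result for triangular arrays of independent, non-identically distributed increments, namely the meander invariance principle of Denisov, Sakhanenko and Wachtel \cite{DSW18}, and then to verify that their hypotheses hold for $P$-almost every realization of $\xi$. Concretely, for a fixed $\xi$ the array of increments $X_1,X_2,\dots$ is independent with $\mathbb{E}_\xi(X_k)=0$, and I need to check three things along the $\xi$-trajectory: (i) a Lindeberg-type condition on the variances $\sigma_k^2(\xi):=\mathbb{E}_\xi(X_k^2)$, ensuring $\frac{1}{N}\sum_{k\le N}\sigma_k^2(\xi)\to\sigma^2$ and that no single increment dominates; (ii) a uniform-integrability / moment condition coming from the $(2+\epsilon)$-moment assumption in (\ref{assumption}); and (iii) the non-degeneracy requirement $\mathbb{P}_\xi(X_1>0)>0$, which is assumed $P$-a.s. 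The first step, then, is to write down precisely the hypotheses of the relevant theorem in \cite{DSW18} for the discrete meander (convergence of $\mathbb{P}_\xi(\cdot\mid\tau_0>N)$, rescaled, to the Brownian meander $\mathbf{P}^{(m)}$).

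Next I would carry out the $P$-a.s. verification using the ergodic theorem. Since $\{\xi_n\}$ is stationary and ergodic and $\theta$ is the shift, the functions $f(\xi):=\sigma_1^2(\xi)=\mathbb{E}_\xi(X_1^2)$ and $g(\xi):=\mathbb{E}_\xi(|X_1|^{2+\epsilon})$ are measurable with $\mathbb{E} f=\sigma^2<\infty$ and $\mathbb{E} g<\infty$ by (\ref{assumption}); Birkhoff's ergodic theorem then gives
\[
\frac{1}{N}\sum_{k=1}^N \sigma_k^2(\xi)=\frac{1}{N}\sum_{k=1}^N f(\theta^{k-1}\xi)\longrightarrow \sigma^2,
\qquad
\frac{1}{N}\sum_{k=1}^N \mathbb{E}_\xi(|X_k|^{2+\epsilon})\longrightarrow \mathbb{E}(|X_1|^{2+\epsilon}),
\]
for $P$-a.e. $\xi$. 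The second convergence yields, by a standard truncation argument (Markov's inequality applied to the $(2+\epsilon)$-moments), the Lindeberg condition $\frac{1}{N}\sum_{k\le N}\mathbb{E}_\xi\!\big(X_k^2\mathbf{1}_{\{|X_k|>\delta\sqrt N\}}\big)\to 0$ for every $\delta>0$, as well as the negligibility of the maximal variance, $\max_{k\le N}\sigma_k^2(\xi)=o(N)$. Together with the $P$-a.s. bound $\mathbb{P}_\xi(X_1>0)>0$, these are exactly the ingredients needed to invoke \cite{DSW18}, and the conclusion (\ref{QIP for meander}) follows on the full-measure set of $\xi$ where all these a.s. statements simultaneously hold (a countable intersection of full-measure events).

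The main obstacle is technical bookkeeping rather than a conceptual difficulty: one must be careful that the hypotheses in \cite{DSW18} are stated for a triangular array indexed by $N$ (with the $N$-th row rescaled by $\sqrt N$), whereas the quenched environment gives a single fixed sequence, so I need the stationarity/ergodicity to convert "per-row" conditions (Lindeberg with threshold $\delta\sqrt N$, Lyapunov ratios) into statements about Cesàro averages along the shift orbit, and to check that the relevant convergences are \emph{uniform enough} in the parameters appearing in \cite{DSW18} (e.g.\ that the variance normalization converges, not merely that it is bounded). A secondary point to handle with care is that \cite{DSW18} may require the increments' distributions themselves, not just their first two moments, to satisfy a spread-out or Cramér-type condition; if so, I would either cite the lattice-robust version of their result or add the corresponding hypothesis, noting that the present (\ref{assumption}) plus $\mathbb{P}_\xi(X_1>0)>0$ already precludes the degenerate cases. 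Once the reduction is set up correctly, no new probabilistic estimate is needed beyond the ergodic theorem and elementary truncation.
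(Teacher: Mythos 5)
Your proposal is correct and follows essentially the same route as the paper: the paper's proof likewise applies the ergodic theorem to get $B_N^2/N\to\sigma^2$ $P$-a.s., checks that assumption (\ref{assumption}) yields the quenched Lindeberg condition (condition (9) of \cite{DSW18}), and then invokes the meander invariance principle (Theorem 3) of \cite{DSW18}. Your side worry about a Cram\'er/spread-out hypothesis is unnecessary, since \cite{DSW18} requires none; otherwise the verification you outline matches the paper's argument.
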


\begin{proof}
	In our setting, noting that $\left\{B^2_n-B^2_{n-1}:=\mathbb{E}_{\xi}(X^2_n)\right\}_{n\in\mathbb{N^*}}$ is a sequence of stationary and ergodic random variables with respect to $P$, by the ergodic theorem we have $\lim\limits_{n\to\infty}\frac{B^2_n}{n}=\sigma^2$, $P$-a.s., and under our condition (\ref{assumption}), for almost each realization of $\xi$, the Lindeberg condition (9) in of Denisov, Sakhanenko and Wachtel \cite{DSW18} holds. Then by the same routine as the proof of Theorem 3 of \cite{DSW18}, we can prove Proposition \ref{Th2}.
\end{proof}

In particular, for $t=1$, we have the quenched central limit theorem corresponding to the quenched invariance principle for the meander.
\begin{corollary}
	Assume that (\ref{assumption}) holds, then for almost all $\xi$, we have
	\begin{equation}\nonumber
		\lim_{N\to\infty}\mathbb{P}_{\xi}\left(\frac{S_{N}}{\sqrt{N}\sigma}\leq u\bigg|\tau_{0}>N\right)=1-e^{-\frac{u^2}{2}},~~~~\text{for any}~~u\geq0.
	\end{equation}
\end{corollary}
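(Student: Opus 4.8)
The final statement is the Corollary deriving a quenched central limit theorem from Proposition \ref{Th2} by specializing to $t=1$. Here is how I would approach it.

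\medskip

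The plan is to deduce the one-dimensional marginal from the functional convergence $\mathbf{P}_{\xi}^{(m),N}\Longrightarrow \mathbf{P}^{(m)}$ established in Proposition \ref{Th2}, using the continuous mapping theorem together with the explicit distribution of the Brownian meander at its endpoint. First I would recall that weak convergence of probability measures on $\Omega_1 = C[0,1]$ implies, via the continuous mapping theorem applied to the (continuous, bounded on bounded sets) evaluation functional $\omega \mapsto \omega_1$, that the pushforward laws of the time-$1$ coordinate converge weakly on $\mathbb{R}$. Under $\mathbf{P}_{\xi}^{(m),N}$ the time-$1$ coordinate is exactly $S_N/(\sqrt{N}\sigma)$ conditioned on $\{\tau_0 > N\}$ (no interpolation is needed at the integer time $t=1$ up to the harmless rescaling by $N$), so this gives
$$
\mathbb{P}_{\xi}\!\left(\frac{S_N}{\sqrt{N}\sigma}\in\cdot \;\middle|\; \tau_0>N\right)\Longrightarrow W_1 \text{ under } \mathbf{P}^{(m)},\qquad P\text{-a.s.}
$$
Second, I would invoke the classical fact that under the Brownian meander law $\mathbf{P}^{(m)}$ the random variable $W_1$ has the Rayleigh distribution: $\mathbf{P}^{(m)}(W_1 \le u) = 1 - e^{-u^2/2}$ for $u \ge 0$ (this is standard — see e.g. Durrett, Iglehart and Miller \cite{DIM77}, already cited for the meander here). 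Third, since the Rayleigh distribution function $u \mapsto 1 - e^{-u^2/2}$ is continuous everywhere on $\mathbb{R}$ (extending by $0$ for $u<0$), weak convergence upgrades to pointwise convergence of distribution functions at every $u$, which yields exactly
$$
\lim_{N\to\infty}\mathbb{P}_{\xi}\!\left(\frac{S_N}{\sqrt{N}\sigma}\le u \;\middle|\; \tau_0>N\right) = 1 - e^{-u^2/2}, \qquad u \ge 0,
$$
for $P$-almost every $\xi$, which is the claim.

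\medskip

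There is essentially no obstacle here: the statement is a routine corollary of Proposition \ref{Th2}. The only minor point requiring a line of care is that the meander measure $\mathbf{P}_{\xi}^{(m),N}$ on $\Omega_1$ is built from the rescaling map $\phi_N$, which uses linear interpolation between integer points; but at $t=1$ the value $(\phi_N(\omega^{RW}))(1) = \omega^{RW}_{(N)}/(\sqrt N\sigma) = S_N/(\sqrt N \sigma)$ is attained at an integer and involves no interpolation, so the identification of the time-$1$ marginal with the conditioned law of $S_N/(\sqrt N\sigma)$ is exact. Since Proposition \ref{Th2} holds for almost every $\xi$ and the remaining steps are deterministic consequences for each such $\xi$, the almost-sure conclusion follows.
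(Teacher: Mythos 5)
Your argument is correct and coincides with the paper's (implicit) proof: the paper obtains this corollary simply by specializing the quenched invariance principle of Proposition \ref{Th2} to $t=1$, which is exactly your continuous-mapping step combined with the Rayleigh law of $W_1$ under $\mathbf{P}^{(m)}$ from \cite{DIM77}. Your extra remark that no interpolation enters at the integer time $N$ is a correct (and harmless) refinement of the same route.
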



\subsubsection{``Bessel process" case: QIP for random walk conditioned to stay positive on the interval $\left(0,\infty\right)$}

We denote by $\mathbf{P}^+_x$ the law on $\Omega$ of the Brownian motion started from $x>0$ and conditioned to stay positive on the time interval $(0,\infty$), which can be defined by the well-known Doob's {\it h}-transform. More precisely, $V(x):=x1_{\{x>0\}}$ is the harmonic function of the Brownian motion killed when it first passes into non-positive half-line, for any $t>0$, $x>0$ and $A\in\mathscr{F}_t$, let
\begin{equation}\label{Prob of Bessel process}
	\mathbf{P}^+_x(A):=\frac{\mathbf{E}_x\left(V(W_t)1_{\{\underline{W}_t >0\}}1_A\right)}{V(x)}.
\end{equation}
For the case that the Brownian motion started at zero, we can define $\mathbf{P}^{+}(\cdot):=\lim\limits_{x\downarrow0}\mathbf{P}^+_x(\cdot)$ (see \cite{CC08}) and furthermore the measures $\mathbf{P}^{+}(\cdot)$ and $\mathbf{P}^{(m)}$ are absolutely continuous with respect to each other (see \cite{Cha97}). In particular, for any $A\in\mathscr{F}_1$,
\begin{equation}\nonumber
	\mathbf{P}^{+}(A)=\mathbf{E}^{(m)}\left(\sqrt{\frac{2}{\pi}}V(W_1)1_A\right).
\end{equation}

We introduce the law on $\Omega_{RW}$ of the quenched random walk started from $y\geq0$ and conditioned to stay positive on the time interval $(0,\infty$), $\mathbb{P}^+_{\xi,~y}$, also by the Doob's {\it h}-transform. More precisely, by point 1 of Corollary \ref{Coro1}, for any $N\in \mathbb{N}^*$, $y\geq0$ and $B\in\sigma\left(\xi\right)\otimes\mathscr{F}^{\xi}_N$, we can define $\mathbb{P}^+_{\xi,~y}$ as
\begin{equation}\label{Prob of RW stay positive}
	\mathbb{P}^+_{\xi,~y}(B):=\frac{\mathbb{E}_{\xi,~y}\left(U(\theta^N \xi, S_N)1_{\{\tau_0>N\}}1_B\right)}{U(\xi,y)}.
\end{equation}

For $x\geq0$ with $y=x\sqrt{N}\sigma$, define the rescaled probability laws on $\Omega$ as
\begin{equation}\label{rescaled prob on Omega}
	\mathbf{P}^{N}_{\xi,~x}:=\mathbb{P}_{\xi,~y}\circ (\phi_N)^{-1},~~~~ \mathbf{P}^{+,N}_{\xi,~x}:=\mathbb{P}^+_{\xi,~y}\circ (\phi_N)^{-1}.
\end{equation}

We have the following quenched invariance principle for random walk conditioned to stay positive on the interval $\left(0,\infty\right)$.
\begin{theorem}\label{Th3}
		Assume that (\ref{assumption}) holds, then for almost all $\xi$, we have
		\begin{equation}\label{QIP for random walk conditioned to stay positive}
			\mathbf{P}_{\xi,~x}^{+,N}\Longrightarrow \mathbf{P}_x^+,~~~~\text{as}~~ N\to \infty.
		\end{equation}
\end{theorem}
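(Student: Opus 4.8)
The plan is to follow the continuity/absolute-continuity strategy of Caravenna and Chaumont \cite{CC08} in the quenched setting, avoiding any direct computation of finite-dimensional distributions or verification of tightness. The starting point is the observation that, for each fixed $N$ and $t<\infty$, the measure $\mathbf{P}^{+,N}_{\xi,~x}$ restricted to $\mathscr F_t$ is absolutely continuous with respect to the rescaled meander measure $\mathbf{P}^{(m),N}_{\xi,~x}$ of the walk started from $y=x\sqrt N\sigma$: indeed, by the Doob $h$-transform \eqref{Prob of RW stay positive} and the martingale property (Corollary \ref{Coro1}, point 1), for $B\in\sigma(\xi)\otimes\mathscr F^\xi_{Nt}$ one has
\begin{equation}\nonumber
\mathbb{P}^+_{\xi,~y}(B)=\frac{\mathbb{E}_{\xi,~y}\bigl(U(\theta^{Nt}\xi,S_{Nt})1_{\{\tau_0>Nt\}}1_B\bigr)}{U(\xi,y)}
=\frac{\mathbb{P}_{\xi}(\tau_0>Nt)}{U(\xi,y)}\,\mathbb{E}^{(m),Nt}_{\xi,~y}\Bigl(\tfrac{U(\theta^{Nt}\xi,S_{Nt})}{S_{Nt}}\,S_{Nt}\,1_B\Bigr),
\end{equation}
so the Radon--Nikodym density is, up to the normalising constant $\mathbb{P}_\xi(\tau_0>Nt)/U(\xi,y)$, the functional $\omega^{RW}\mapsto U(\theta^{Nt}\xi,\omega^{RW}_{Nt})$ evaluated on the meander. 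On the continuous side the analogous identity holds with $V(W_t)=W_t1_{\{W_t>0\}}$ and the constant $\sqrt{2/(\pi)}$, as recalled before Theorem \ref{Th3}.

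The key steps, in order, are as follows. First, I would rewrite the rescaled density in terms of $\phi_N$: after rescaling by $1/(\sqrt N\sigma)$, the density becomes $U(\theta^{Nt}\xi,\sqrt N\sigma\,W_t)/(\sqrt N\sigma)$ times a normalising constant. Second, I would show that this density converges, as $N\to\infty$, to the limiting density $\sqrt{2/\pi}\,V(W_t)$. For the functional part this is exactly claim 2 of Corollary \ref{Coro1}: $U(\theta^{Nt}\xi,y_N)/y_N\to1$ for any $y_N\to\infty$, so $U(\theta^{Nt}\xi,\sqrt N\sigma\,W_t)/(\sqrt N\sigma)\to W_t$ pointwise on the event $\{W_t>0\}$ (which has full measure under $\mathbf{P}^{(m)}$); some care is needed because $W_t$ may be small, but the monotonicity and the linear asymptotics of $U(\xi,\cdot)$, together with the bound $U(\xi,y)\le C(\xi)(1+y)$ coming from Theorem \ref{Th1}, let one dominate. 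For the normalising constant, Corollary \ref{Coro1} point 3 gives $\mathbb{P}_\xi(\tau_0>Nt)\sim \sqrt2\,U(\xi,0_+)/(\sqrt{\pi Nt}\,\sigma)$ — more precisely one uses the version started from $y$ — and combined with the meander normalisation and $U(\xi,y)/y\to1$ (here $y=x\sqrt N\sigma$), the constants match up to $\sqrt{2/\pi}$ in the limit. Third, having the convergence of densities and, by Proposition \ref{Th2}, the weak convergence $\mathbf{P}^{(m),N}_{\xi,~x}\Rightarrow\mathbf{P}^{(m)}$ (the meander does not see the starting point $x$ in the scaling limit), I would invoke a Scheff\'e/uniform-integrability type argument — precisely the continuity lemma used in \cite{CC08,Bol76} — to conclude that for every bounded continuous $F$ on $\Omega_t$,
\begin{equation}\nonumber
\mathbf{E}^{+,N}_{\xi,~x}(F)=\mathbf{E}^{(m),N}_{\xi,~x}\bigl(F\cdot(\text{density})\bigr)\longrightarrow \mathbf{E}^{(m)}\bigl(F\cdot\sqrt{2/\pi}\,V(W_t)\bigr)=\mathbf{E}^+_x(F).
\end{equation}
Fourth, since $t$ is arbitrary and convergence of the finite-time restrictions (together with the already-established tightness inherited from the meander via the density bound, or directly from \cite{CC08}) upgrades to convergence on $\Omega=C[0,\infty)$, this yields \eqref{QIP for random walk conditioned to stay positive}.

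The main obstacle I expect is the passage from convergence of the one-dimensional density functional to convergence of expectations of $F$ against it, i.e. justifying the interchange of limit and integration uniformly in a way that is valid for $P$-a.e.\ $\xi$. The densities $U(\theta^{Nt}\xi,\sqrt N\sigma\,W_t)/(\sqrt N\sigma)$ are not uniformly bounded (they grow like $W_t$), so one needs a uniform-integrability estimate: a moment bound of the form $\sup_N\mathbf{E}^{(m),N}_{\xi,~x}\bigl[(U(\theta^{Nt}\xi,\sqrt N\sigma W_t)/(\sqrt N\sigma))^{1+\delta}\bigr]<\infty$ for $P$-a.e.\ $\xi$, which in turn requires controlling the random constant $C(\xi)$ in $U(\xi,y)\le C(\xi)(1+y)$ and a uniform (in $N$) bound on the $(1+\delta)$-moment of $S_{Nt}/(\sqrt N\sigma)$ under the discrete meander law — the latter is available from the meander invariance principle of \cite{DSW18} (Proposition \ref{Th2}) provided the $2+\epsilon$ moment assumption \eqref{assumption} is used to get uniform integrability of $S_{Nt}^2/N$. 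Handling the near-boundary behaviour (small $W_t$, where the ratio $U/y$ is not yet close to $1$) is a secondary technical point, resolved by splitting on $\{W_t\le \varepsilon\}$ and letting $\varepsilon\downarrow0$ after $N\to\infty$, using that $V(W_t)=W_t$ is small there too. Everything else is a quenched transcription — performed for $P$-a.e.\ fixed $\xi$ — of the non-random-environment argument in \cite{CC08}.
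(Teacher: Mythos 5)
Your overall strategy for $x=0$ (absolute continuity of $\mathbf{P}^{+,N}_{\xi}$ with respect to the meander law, convergence of the Radon--Nikodym density, then passing to the limit) is the same as the paper's, but the step you yourself flag as the main obstacle is left unresolved, and it is resolved in the paper by a different, much lighter device. You propose a uniform-integrability bound of the form $\sup_N\mathbf{E}^{(m),N}_{\xi}\bigl[\bigl(U(\theta^{N}\xi,\sqrt{N}\sigma W_1)/(\sqrt{N}\sigma)\bigr)^{1+\delta}\bigr]<\infty$ for $P$-a.e.\ $\xi$, which you neither prove nor actually need. The paper avoids any moment estimate: the density integrates to $1$ under $\mathbf{P}^{(m),N}_{\xi}$ for every $N$, and $\mathbf{E}^{(m)}\bigl(\sqrt{2/\pi}\,V(W_1)\bigr)=1$, so after truncating with $I_L$ the two tail contributions are bounded by $M\bigl(1-\mathbf{E}^{(m),N}_{\xi}(\tilde{U}I_L)\bigr)$ and $M\bigl(1-\mathbf{E}^{(m)}(\sqrt{2/\pi}\,V\,I_L)\bigr)$; the first converges to the second (take $H\equiv1$ in the truncated convergence \eqref{first term of triangle inequality}) and the second vanishes as $L\to\infty$. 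That total-mass argument is the actual content of the Bolthausen/Caravenna--Chaumont continuity lemma you invoke; without it, or without a proved moment bound, your step three is incomplete. Relatedly, pointwise convergence $U(\theta^{N}\xi,\sqrt{N}\sigma w)/(\sqrt{N}\sigma)\to w$ is not enough to integrate against the weakly converging meander laws: the paper upgrades claim 2 of Corollary \ref{Coro1} to \emph{uniform} convergence of $\tilde{U}(\xi,N,\cdot)$ to $\sqrt{2/\pi}V(\cdot)$ on $(0,L]$ via the uniform convergence theorem for regularly varying functions, and it is this uniformity that makes the truncated term converge.

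The case $x>0$ does not go through as you describe it. Starting from $y=x\sqrt{N}\sigma$ with $x>0$ fixed, the walk conditioned on $\tau_0>N$ does \emph{not} converge to the Brownian meander: in the limit one conditions Brownian motion started at $x$ on the event $\{\underline{W}_1>0\}$, which has positive probability, so the starting point very much survives the scaling limit, contrary to your claim. For the same reason $\mathbb{P}_{\xi,~y}(\tau_0>N)$ tends to a positive constant, so Corollary \ref{Coro1} claim 3 --- an asymptotic for \emph{fixed} $y$ as $n\to\infty$ --- cannot be applied with $y=x\sqrt{N}\sigma$, and your matching of normalizing constants breaks down. The paper treats $x>0$ by writing the $h$-transform density with respect to the \emph{unconditioned} rescaled law $\mathbf{P}^{N}_{\xi,~x}$ (density $\tilde{U}(\xi,N,W_1)1_{\{\underline{W}_1>0\}}$ up to the normalization $U(\xi,\sqrt{N}\sigma x)\mathbb{P}_\xi(\tau_0>N)/U(\xi,0)$), invoking the quenched invariance principle without conditioning \eqref{QIP for random walk without conditioning} together with the same truncation and total-mass identities \eqref{two equalities}; the normalizing constant then converges to $\sqrt{2/\pi}\,V(x)$ by \eqref{asymptotic behaviour of harmonic function 1} and \eqref{asymptotic behaviour of RW stay positive} applied at the fixed point $0$. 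To repair your argument you would have to replace the meander-based step for $x>0$ by this (or by the correctly identified limit $\mathbf{P}_x(\cdot\mid\underline{W}_1>0)$, with the attendant new normalization), which is precisely where it currently fails.
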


\begin{remark} 
	
		
The proof of this theorem is divided into two steps.

(1) For the case $x=0$, the main idea is to make use of the absolute continuity between $\mathbf{P}^{+,N}_{\xi}$ $\left(\text{respectively},~\mathbf{P}^{+}\right)$ and the meander $\mathbf{P}^{(m),N}_{\xi}$ $\left(\text{respectively},~\mathbf{P}^{(m)}\right)$ and then to employ quenched invariance principle for the meander (\ref{QIP for meander}). 
	
	
(2) For the case $x>0$, by the same approach, we apply the absolute continuity between $\mathbf{P}^{+,N}_{\xi,~x}$ $\left(\text{respectively},~\mathbf{P}^{+}_{x}\right)$ and $\mathbf{P}_{\xi,~x}^{N}$ $\left(\text{respectively},~\mathbf{P}_{x}\right)$ together with the quenched invariance principle of unconditional law. In fact, under our assumption, for almost all $\xi$, the quenched Lindeberg condition holds, so we can obtain the quenched invariance principle for random walk without conditioning from Theorem 3.1 of Prokhorov \cite{Pro56}. That is, for almost all $\xi$, we have
	\begin{equation}\label{QIP for random walk without conditioning}
		\mathbf{P}_{\xi,~x}^{N}\Longrightarrow \mathbf{P}_x,~~~~\text{as}~~ N\to \infty.
	\end{equation}


\end{remark}


\subsection{Organization of the paper}
The rest of this paper is organized as follows. In Section 2, we prove Theorem \ref{Th1} that the function $U$ given by Proposition \ref{Prop1} is quenched harmonic, and collect some preliminary facts that will be used later. With the help of quenched harmonic function, we can construct the random walks conditioned to stay positive on the interval $\left(0,\infty\right)$ for almost all $\xi$. Then, we prove the QIP for this conditioned walks (Theorem \ref{Th3}) in Section 3.


\section{Proof of the facts for quenched harmonic function}

The main purpose of this section is to prove Theorem \ref{Th1}, we assume (\ref{assumption}) holds throughout this section.

\subsection{Proof of Proposition \ref{Prop1}}

In this subsection, we shall show that the function $U$ is well defined for almost all $\xi$. A key step is to prove that the random variable $S_{\tau_{y}}$ is integrable under $\mathbb{P}_{\xi}$ for almost all $\xi$ (see Lemma \ref{Lem2.2}).

We first give a fact that $\mathbb{P}_{\xi}\left(\tau_{y}<\infty\right)=1$ for almost all $\xi$ (Lemma \ref{Lem2.1}), by the method of strong approximation based on the following implicit rate of convergence in the invariance principle.

\begin{lemma}\label{Lem2.1.1}
	Given the environment $\xi$, for every $n\geq1$, we can  reconstruct a random walk $\{S_n\}_{n\in\mathbb{N}}$ and a Brownian motion $\{W^n_t\}_{t\in\mathbb{R}_+}$ on a common probability space such that
	\begin{equation}\label{strong approximation}
		\mathbf{P}_{\xi}\left(\sup_{0\leq t\leq1}\left|S_{(nt)}-W^n_{n\sigma^2 t}\right|>\sqrt{n}\sigma\pi_{n}\right) \leq \pi_{n},
	\end{equation}
where $\pi_{n}$ (depending on $\xi$) denotes the Prokhorov distance between the distribution on $C[0,1]$ of the process $S_{(\cdot)}$ (the linear interpolation of $S$) and the Brownian motion $W^n_{\cdot}$, and for almost each realization of $\xi$, $\pi_{n}\to0$ as $n\to\infty$.
\end{lemma}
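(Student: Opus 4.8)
The plan is to obtain \eqref{strong approximation} from the quenched invariance principle for the \emph{unconditioned} walk together with Strassen's representation of the Prokhorov distance by a coupling. The quenched invariance principle is itself deduced from the functional central limit theorem for sums of independent, not necessarily identically distributed random variables (the route used in \cite{DSW18}; cf. Theorem~3.1 of \cite{Pro56}) after verifying that its hypotheses hold $P$-a.s. by the ergodic theorem.

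\emph{Step 1 (quenched invariance principle).} Put $B_n^2:=\sum_{k=1}^n\mathbb{E}_\xi(X_k^2)$ and let $\mu_n^\xi$ denote the law on $C[0,1]$ of the rescaled interpolated walk $t\mapsto S_{(nt)}/(\sqrt n\,\sigma)$. I would show that $\mu_n^\xi\Rightarrow\mathbf P$ (Wiener measure) for $P$-a.e.\ $\xi$. By the functional CLT for independent summands this requires: (i) $B_n^2/n\to\sigma^2$, so that $B^2_{\lfloor nt\rfloor}/(n\sigma^2)\to t$ for each $t\in[0,1]$, i.e.\ the Brownian covariance structure in the limit; and (ii) the Lindeberg condition $\frac1n\sum_{k=1}^n\mathbb{E}_\xi(X_k^2;|X_k|>\varepsilon\sqrt n)\to0$ for every $\varepsilon>0$. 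Both follow from Birkhoff's ergodic theorem applied to the stationary ergodic sequences $\{\mathbb{E}_\xi(X_k^2)\}_{k\ge1}$ and $\{\mathbb{E}_\xi(|X_k|^{2+\epsilon})\}_{k\ge1}$: the first has mean $\mathbb{E}\,\mathbb{E}_\xi(X_1^2)=\sigma^2<\infty$, giving (i); for (ii) one uses the elementary bound $\mathbb{E}_\xi(X_k^2;|X_k|>\varepsilon\sqrt n)\le(\varepsilon\sqrt n)^{-\epsilon}\mathbb{E}_\xi(|X_k|^{2+\epsilon})$, so that for $\xi$ outside the ergodic-theorem null set of $\mathbb{E}_\xi(|X_1|^{2+\epsilon})$ (finite mean by \eqref{assumption}) the left side is $O(n^{-\epsilon/2})$. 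Hence there is a single $P$-null set $\mathcal N$ off which the quenched invariance principle holds.

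\emph{Steps 2--3 (Prokhorov distance and coupling).} Since $C[0,1]$ with the sup norm is a complete separable metric space, the Prokhorov metric $d_P$ metrizes weak convergence on it; and by Brownian scaling $t\mapsto W^n_{n\sigma^2 t}/(\sqrt n\,\sigma)$ has law $\mathbf P$ for every $n$. Thus setting $\pi_n:=d_P(\mu_n^\xi,\mathbf P)$ turns Step~1 into $\pi_n\to0$ for $\xi\notin\mathcal N$, which is the last assertion of the lemma. For each fixed $n$, Strassen's theorem then furnishes a common probability space carrying a copy of the rescaled walk and a standard Brownian motion $\widehat W$ on $[0,1]$ with $\mathbf P_\xi\big(\sup_{0\le t\le1}|S_{(nt)}/(\sqrt n\,\sigma)-\widehat W_t|>\pi_n\big)\le\pi_n$ (if one prefers a strict/closed-ball convention in $d_P$, replace $\pi_n$ by $\pi_n+1/n$, which is harmless); recovering $S$ by reading off the interpolated walk at $t=k/n$ and setting $W^n_s:=\sqrt n\,\sigma\,\widehat W_{s/(n\sigma^2)}$ for $s\le n\sigma^2$ (extended by an independent increment thereafter if $W^n$ is wanted on all of $\mathbb R_+$), and multiplying the event through by $\sqrt n\,\sigma$, gives exactly \eqref{strong approximation}.

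\emph{Main difficulty.} The only substantive point is the quenched Lindeberg condition (ii): it is precisely here that the strengthened moment assumption $\mathbb{E}(X_1^{2+\epsilon})<\infty$ in \eqref{assumption}, rather than a bare second moment, is used, to upgrade annealed integrability to an almost-sure ergodic average with power-rate decay in $n$; with only $\mathbb{E}(X_1^2)<\infty$ the quenched Lindeberg condition may fail on a set of positive $P$-measure. Everything else --- Birkhoff's ergodic theorem, the functional CLT for independent summands, and the Strassen coupling lemma --- is standard.
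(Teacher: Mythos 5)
Your proposal is correct and takes essentially the same approach as the paper: the paper simply cites Lemma 16 of \cite{DSW18} together with Remark 2 of \cite{Sak06} for the Prokhorov-distance coupling (your Strassen--Dudley step is a self-contained version of that citation), and then, exactly as you do, deduces $\pi_n\to 0$ for $P$-a.e.\ $\xi$ from $B_n^2/n\to\sigma^2$ via the ergodic theorem and the quenched Lindeberg condition, which is where the $(2+\epsilon)$-moment assumption is used. No gaps; your explicit verification of the Lindeberg condition is precisely the argument the paper leaves implicit.
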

This result is a consequence of Lemma 16 in \cite{DSW18} and Remark 2 of Sakhanenko \cite{Sak06}. Noting that $B_{n}\sim \sqrt{n}\sigma$ $P$-a.s. as $n\to\infty$, and assumption (\ref{assumption}) implies that, for almost each realization of $\xi$, the quenched Lindeberg condition holds, which is equivalent to the Prokhorov distance $\pi_{n}\to0$ as $n\to\infty$.

\begin{lemma}\label{Lem2.1}
	For almost all $\xi$, we have $\tau_{y}<\infty$, $\mathbb{P}_\xi$-a.s..
\end{lemma}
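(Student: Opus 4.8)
The goal is to show that for almost every environment $\xi$, the killed random walk $\{y+S_n\}$ enters $(-\infty,0]$ with quenched probability one, i.e. $\tau_y<\infty$ a.s. under $\mathbb{P}_\xi$. The natural approach is to transfer the recurrence of Brownian motion (which visits $(-\infty,0]$ in finite time a.s.) to the random walk via the strong approximation in Lemma \ref{Lem2.1.1}. First I would fix a realization of $\xi$ for which $\pi_n\to 0$ (a set of full $P$-measure) and argue by contradiction: suppose $\mathbb{P}_\xi(\tau_y=\infty)=:\delta>0$, meaning the event $\{y+S_n>0 \text{ for all } n\}$ has positive quenched probability.

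On that event, in particular $\{y+S_{(nt)}>0 \text{ for all } t\in[0,1]\}$ for every $n$. Using the coupling of Lemma \ref{Lem2.1.1}, on a further event of probability at least $\delta-\pi_n$ the approximating Brownian motion satisfies $W^n_{n\sigma^2 t}>-\sqrt{n}\sigma\pi_n$ for all $t\in[0,1]$ simultaneously with $y+S_{(nt)}>0$; after the time change $s=n\sigma^2 t$ and rescaling by $\sqrt{n}\sigma$, this says a standard Brownian motion on $[0,1]$ stays above $-\pi_n - y/(\sqrt n\sigma)$ with probability at least $\delta-\pi_n$. But the $\mathbf{P}_0$-probability that Brownian motion stays above $-\varepsilon$ on $[0,1]$ tends to $0$ as $\varepsilon\downarrow 0$ (it is $\mathbf{P}_0(\underline W_1>-\varepsilon)=\mathbf{P}_0(\underline W_1 > -\varepsilon)\to \mathbf{P}_0(\underline W_1\ge 0)=0$ by continuity and the reflection principle). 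Since $\pi_n\to 0$ and $y/(\sqrt n\sigma)\to 0$, for $n$ large the bound $\delta-\pi_n\le \mathbf{P}_0(\underline W_1 > -\pi_n-y/(\sqrt n\sigma))$ forces $\delta\le 0$, a contradiction. Hence $\mathbb{P}_\xi(\tau_y<\infty)=1$.

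One technical point to handle carefully is that the Brownian motion $W^n$ in Lemma \ref{Lem2.1.1} depends on $n$ and lives on a coupling space that also depends on $n$, so the argument must be run for each fixed large $n$ separately rather than along a single trajectory — which is exactly what the contradiction scheme above does, since it only needs one value of $n$ with $\delta-\pi_n > \mathbf{P}_0(\underline W_1>-\pi_n-y/(\sqrt n\sigma))$. A second point is the slight mismatch between $S_{(nt)}$ (the linear interpolation) and the integer-time values $S_n$: since interpolation only takes values between consecutive $S_k$, the event $\{\tau_y>n\}$ at integer times is implied by positivity of the interpolated path, so no loss occurs. The main obstacle, such as it is, is simply organizing the $\varepsilon$-$\delta$ bookkeeping so that the Brownian small-ball estimate $\mathbf{P}_0(\underline W_1>-\varepsilon)\to 0$ is invoked at the right scale; everything else is a routine transfer via the Prokhorov-distance coupling.
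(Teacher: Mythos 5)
Your proof is correct and takes essentially the same route as the paper: both use the coupling of Lemma \ref{Lem2.1.1} to transfer the event $\{\tau_y>n\}$ to the approximating Brownian motion and then invoke the estimate $\mathbf{P}\bigl(\underline{W}_1\ge -\varepsilon\bigr)=2\int_0^{\varepsilon}\tfrac{1}{\sqrt{2\pi}}e^{-x^2/2}\,dx\to 0$ at the scale $\varepsilon=\pi_n+y/(\sqrt{n}\sigma)$. The paper merely phrases this as the direct bound $\mathbb{P}_\xi(\tau_y>n)\le 2\int_0^{(y+\sqrt{n}\sigma\pi_n)/(\sqrt{n}\sigma)}\tfrac{1}{\sqrt{2\pi}}e^{-x^2/2}\,dx+\pi_n\to 0$ instead of your contradiction argument (and your lone slip, writing $W^n_{n\sigma^2 t}>-\sqrt{n}\sigma\pi_n$ without the $-y$, is corrected in the very next clause, so it is immaterial).
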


\begin{proof}
	Since $\left|\inf_{t\leq 1}S_{(nt)}-\inf_{t\leq 1}W^n_{n\sigma^2 t}\right|\leq\sup_{t\leq 1}\left|S_{(nt)}-W^n_{n\sigma^2 t}\right|$, we have
	\begin{equation}\nonumber
		\begin{aligned}
			\mathbb{P}_{\xi}\left(\tau_{y}>n\right)=&\mathbb{P}_{\xi}\left(y+\min_{k\leq n}S_{k}>0\right)\\
			=&\mathbf{P}_{\xi}\left(y+\inf_{t\leq 1}S_{(nt)}>0\right)\\
			\leq& \mathbf{P}_{\xi}\left(y+\sup_{t\leq 1}\left|S_{(nt)}-W^n_{n\sigma^2 t}\right|+\inf_{t\leq 1}W^n_{n\sigma^2 t}>0\right)\\
			\leq& \mathbf{P}_{\xi}\left(y+\sqrt{n}\sigma\pi_{n}+\inf_{t\leq 1}W^n_{n\sigma^2 t}>0\right)+\mathbf{P}_{\xi}\left(\sup_{t\leq 1}\left|S_{(nt)}-W^n_{n\sigma^2 t}\right|>\sqrt{n}\sigma\pi_{n}\right)\\
			\leq& \mathbf{P}_{\xi}\left(\frac{y+\sqrt{n}\sigma\pi_{n}}{\sqrt{n}\sigma}+\inf_{t\leq 1}W^n_{t}>0\right)+\pi_{n}\\
			=&2\int_{0}^{\frac{y+\sqrt{n}\sigma\pi_{n}}{\sqrt{n}\sigma}}\frac{1}{\sqrt{2\pi}}e^{-\frac{x^2}{2}}dx+\pi_{n},
		\end{aligned}
	\end{equation}
	where the last inequality follows by Lemma \ref{Lem2.1.1}.
	Since for almost each realization of $\xi$, $\pi_{n}\to0$ as $n\to\infty$, we obtain $\mathbb{P}_{\xi}\left(\tau_{y}>n\right)\to0$.
\end{proof}

Next we prove the integrability of $S_{\tau_{y}}$ under $\mathbb{P}_{\xi}$ for almost all $\xi$. We begin by giving an upper bound for $\mathbb{P}_{\xi}\left(\tau_{y}>n\right)$.

\begin{lemma}\label{Lem2.2.1}
	Fix $y\geq0$, for almost all $\xi$, there exists an integer $N_1$ (depending on $\xi$ and $y$) such that, for all\footnote{Here $n$ depends on $\xi$, but we only write $n$ for simplicity.} $n>N_1$, we have
	$$\mathbb{P}_{\xi}\left(\tau_{y}>n\right)<\frac{3\mathbb{E}_\xi(y+S_n; \tau_{y}>n)}{\sqrt{n}\sigma}.$$
\end{lemma}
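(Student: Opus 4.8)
The plan is to compare the quenched walk with a Brownian motion via the strong approximation of Lemma~\ref{Lem2.1.1} and then to exploit a simple one-sided relation between $\mathbb{P}_{\xi}(\tau_y>n)$ and $\mathbb{E}_\xi(y+S_n;\tau_y>n)$ coming from the reflection/barrier behaviour of Brownian motion. The crude idea is that on the event $\{\tau_y>n\}$ the walk $y+S_n$ stays positive, so its conditional expectation is of order $\sqrt{n}\sigma$ (not much smaller), and hence $\mathbb{E}_\xi(y+S_n;\tau_y>n)$ is bounded below by a constant multiple of $\sqrt{n}\sigma\,\mathbb{P}_{\xi}(\tau_y>n)$, with constant strictly better than $1/3$ for all large $n$.

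First I would set $\sigma_n^2 := B_n^2 = \sum_{k\le n}\mathbb{E}_\xi(X_k^2)$, recall $B_n\sim\sqrt{n}\sigma$ $P$-a.s., and introduce a threshold $a_n := \sqrt{n}\sigma\,\delta$ for a fixed small $\delta>0$. Then I would split
\begin{equation}\nonumber
	\mathbb{E}_\xi(y+S_n;\tau_y>n) \geq \mathbb{E}_\xi\bigl(y+S_n;\,\tau_y>n,\, y+S_n> a_n\bigr) \geq a_n\,\mathbb{P}_\xi\bigl(\tau_y>n,\, y+S_n>a_n\bigr),
\end{equation}
so it suffices to show $\mathbb{P}_\xi(\tau_y>n,\,y+S_n>a_n)$ is at least (essentially) a fixed fraction of $\mathbb{P}_\xi(\tau_y>n)$, i.e.\ that conditioned on staying positive up to $n$ the endpoint is not too often below $a_n$. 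This is exactly where the strong approximation enters: writing $\{S_{(nt)}\}$ within $\sqrt{n}\sigma\pi_n$ of a Brownian motion $W^n_{n\sigma^2\cdot}$ off an event of probability $\pi_n$, I would bound $\mathbb{P}_\xi(\tau_y>n,\,y+S_n\le a_n)$ above by $\mathbf{P}_\xi\bigl(\underline{W}^n_{n\sigma^2}>-y-\sqrt n\sigma\pi_n,\ W^n_{n\sigma^2}\le a_n+\sqrt n\sigma\pi_n\bigr)+\pi_n$, and similarly bound $\mathbb{P}_\xi(\tau_y>n)$ below by a Brownian quantity minus $\pi_n$. After rescaling by $\sqrt n\sigma$ both become explicit Brownian functionals depending only on $y/(\sqrt n\sigma)\to 0$, $\delta$, and $\pi_n\to0$; using the reflection principle, $\mathbf{P}(\underline W_1>-\varepsilon,\,W_1\le\delta) = \mathbf{P}(|W_1|\le\delta) - (\text{terms}\to 0\text{ as }\varepsilon\to0)$, while $\mathbf{P}(\underline W_1>-\varepsilon)\sim \varepsilon\sqrt{2/\pi}$, so the ratio of the ``bad'' part to the whole tends to a quantity that can be made arbitrarily small by choosing $\delta$ small. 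Hence for $\delta$ fixed small enough and $n$ large (larger than some $N_1=N_1(\xi,y)$ absorbing the $\pi_n$ corrections and the $B_n\sim\sqrt n\sigma$ error), one gets $\mathbb{P}_\xi(\tau_y>n,\,y+S_n>a_n)\ge \tfrac12\mathbb{P}_\xi(\tau_y>n)$, say, and then
\begin{equation}\nonumber
	\mathbb{E}_\xi(y+S_n;\tau_y>n) \geq \tfrac{\delta}{2}\sqrt n\sigma\,\mathbb{P}_\xi(\tau_y>n),
\end{equation}
which after renaming constants yields the claimed inequality with the factor $3$ (indeed any constant $>1/(\delta/2)$ works once $\delta$ is pinned down; choosing $\delta$ so that $\delta/2 > 1/3$ would even be overkill, but more realistically one proves the sharper statement that $\mathbb{E}_\xi(y+S_n;\tau_y>n)/(\sqrt n\sigma\,\mathbb{P}_\xi(\tau_y>n))$ is bounded below by a constant approaching the Brownian meander mean $\sqrt{\pi/2}$, comfortably above $1/3$).

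The main obstacle is the lower bound on the conditional endpoint: one must rule out that, conditioned on $\{\tau_y>n\}$, the variable $y+S_n$ is typically of order $o(\sqrt n)$. In the i.i.d.\ (non-environment) setting this is classical and follows from the local limit theorem or from the meander invariance principle; here I would instead lean entirely on Lemma~\ref{Lem2.1.1}, transferring the question to Brownian motion where $\mathbf{P}(\underline W_1>-\varepsilon,\ W_1\le \delta)/\mathbf{P}(\underline W_1>-\varepsilon)\to 0$ as $\varepsilon\downarrow0$ for fixed $\delta$ (a direct reflection-principle computation), and being careful that all error terms $\pi_n$ and $|B_n/\sqrt n-\sigma|$ are controlled for almost every $\xi$ beyond the threshold $N_1$. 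A secondary but purely bookkeeping point is that $N_1$ legitimately depends on $\xi$ and $y$ through the rate at which $\pi_n\to0$ and $B_n/\sqrt n\to\sigma$, which is exactly why the statement is phrased with such an $N_1$.
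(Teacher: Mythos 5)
There is a genuine gap: your argument needs a \emph{relative} (multiplicative) comparison between the walk and the Brownian motion on events whose probability is of order $n^{-1/2}$, but Lemma \ref{Lem2.1.1} only provides an \emph{additive} error of size $\pi_n$, and nothing in the hypotheses makes $\pi_n=o(n^{-1/2})$ (under a $2+\epsilon$ moment bound the Prokhorov distance decays only at a slow polynomial rate). Concretely, your lower bound for the denominator reads
$\mathbb{P}_{\xi}(\tau_y>n)\ \geq\ \mathbf{P}\bigl(\underline{W}_1>-\tfrac{y}{\sqrt{n}\sigma}+\pi_n\bigr)-\pi_n$,
and for fixed $y$ one has $\tfrac{y}{\sqrt{n}\sigma}\leq\pi_n$ for all large $n$ unless $\pi_n$ decays faster than $n^{-1/2}$, so the Brownian term is $0$ (since $\underline{W}_1\leq 0$ a.s.) and the bound is vacuous. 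The same problem hits the numerator: the upper bound for $\mathbb{P}_{\xi}(\tau_y>n,\,y+S_n\leq a_n)$ carries the additive term $+\pi_n$, which by itself may already exceed $\mathbb{P}_{\xi}(\tau_y>n)\asymp n^{-1/2}$, so the claimed inequality $\mathbb{P}_{\xi}(\tau_y>n,\,y+S_n>a_n)\geq\tfrac12\mathbb{P}_{\xi}(\tau_y>n)$ cannot be extracted from Lemma \ref{Lem2.1.1}. The heuristic (under $\{\tau_y>n\}$ the endpoint is typically of order $\sqrt{n}\sigma$, with limiting meander mean $\sqrt{\pi/2}$) is correct, but establishing it is essentially equivalent to the conditional limit theorems proved \emph{later} in the paper, so it cannot serve as the engine of this lemma.

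The paper's proof avoids small-probability estimates altogether. By an FKG-type correlation inequality, $\mathbb{P}_{\xi}(S_n>x,\tau_y>n)\geq\mathbb{P}_{\xi}(S_n>x)\mathbb{P}_{\xi}(\tau_y>n)$, whence
\begin{equation}\nonumber
\frac{\mathbb{E}_{\xi}(y+S_n;\tau_y>n)}{\mathbb{P}_{\xi}(\tau_y>n)}=\int_{0}^{\infty}\mathbb{P}_{\xi}(y+S_n>x\mid\tau_y>n)\,dx\ \geq\ \mathbb{E}_{\xi}(y+S_n)^+ ,
\end{equation}
and then the quenched CLT plus Fatou give $\liminf_n \mathbb{E}_{\xi}(y+S_n)^+/(\sqrt{n}\sigma)\geq 1/\sqrt{2\pi}>1/3$, which is exactly the claimed bound for $n>N_1(\xi,y)$. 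If you want to rescue your route, you would need either a strong approximation with error $o(n^{-1/2})$ (not available here) or an a priori lower bound on $\mathbb{P}_{\xi}(\tau_y>n)$ of the correct order, neither of which is at hand at this stage; the correlation-inequality reduction to the \emph{unconditioned} quantity $\mathbb{E}_{\xi}(y+S_n)^+$ is the key idea you are missing.
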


\begin{proof}
	For any $n\geq1$, $x\in\mathbb{R}$, we have the inequality \begin{equation}\label{FKG inequality}
		\mathbb{P}_{\xi}(S_n>x,\tau_{y}>n)\geq\mathbb{P}_{\xi}(S_n>x)\mathbb{P}_{\xi}(\tau_{y}>n).
	\end{equation}
	This statement is a direct consequence of FKG inequality, or see Lemma 24 in \cite{DSW18}.
	
	Hence, by (\ref{FKG inequality}), we get
	$$
	\begin{aligned}
		\frac{\mathbb{E}_{\xi}(y+S_n;\tau_{y}>n)}{\mathbb{P}_{\xi}(\tau_{y}>n)}&=\mathbb{E}_{\xi}(y+S_n\mid\tau_{y}>n)\\
		&=\int_{0}^{\infty}\mathbb{P}_{\xi}(y+S_n>x\mid\tau_{y}>n)dx\\
		&\geq\int_{0}^{\infty}\mathbb{P}_{\xi}(y+S_n>x)dx\\
		&=\mathbb{E}_{\xi}(y+S_n)^+.
	\end{aligned}
	$$
	On the other hand, by the central limit theorem, for almost all $\xi$, $\frac{y+S_n}{\sqrt{n}\sigma}$ converges in law ($\mathbb{P}_{\xi}$) to the standard normal distribution. By the Fatou's lemma, we obtain $$\liminf_{n\to\infty}\frac{\mathbb{E}_{\xi}(y+S_n)^+}{\sqrt{n}\sigma}\geq\int_{0}^{\infty}\frac{x}{\sqrt{2\pi}}e^{-\frac{x^2}{2}}dx=\frac{1}{\sqrt{2\pi}}>\frac{1}{3},$$
	so we can choose $N_1:=\max\{n\geq1:\mathbb{E}_{\xi}(y+S_n)^+\leq\frac{\sqrt{n}\sigma}{3}\}<\infty$ for almost all $\xi$.
	
	Therefore, for all $n>N_1$, we have $$\mathbb{P}_{\xi}(\tau_{y}>n)\leq\frac{\mathbb{E}_{\xi}(y+S_n;\tau_{y}>n)}{\mathbb{E}_{\xi}(y+S_n)^+}\leq\frac{3\mathbb{E}_{\xi}(y+S_n;\tau_{y}>n)}{\sqrt{n}\sigma}.$$
\end{proof}

For simplicity, let us introduce some notations:
$$
\begin{aligned}
	&g_n:=\frac{\sqrt{n}}{\log ^{1+\delta}n}~~(\delta>0),~~~~ G_{(>n)}:=\sum_{i>n}\frac{g_i}{(i-1)^{\frac{3}{2}}\sigma},~~~~Y_{(>n)}:=\sum_{i>n}\frac{\mathbb{E}_{\xi}(-X_i;-X_i>g_i)}{\sqrt{i-1}\sigma},\\
	&Z_{k,n}:=\max_{k\leq i\leq n}\mathbb{E}_{\xi}(y+S_i;\tau_{y}>i).
\end{aligned}
$$

\begin{lemma}\label{Lem2.2.2}
	Fix $y\geq0$, let $N_1$ be the same as Lemma \ref{Lem2.2.1}, then for almost all $\xi$, for\footnote{As Lemma \ref{Lem2.2.1}, $N_2, k, n$ all depend on $\xi$.} $N_1\leq N_2\leq k\leq n$, we have
	\begin{equation}\label{N21}
		\mathbb{E}_{\xi}(g_{\tau_{y}},N_2<\tau_{y}\leq k)\leq 3Z_{N_2,n}G_{(>N_2)}
	\end{equation}
and
	\begin{equation}\label{N22}
		\mathbb{E}_{\xi}(-X_{\tau_{y}};-X_{\tau_{y}}>g_{\tau_{y}},N_2<\tau_{y}\leq k)\leq 3Z_{N_2,n}Y_{(>N_2)}.
	\end{equation}
\end{lemma}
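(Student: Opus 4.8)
We plan to prove both inequalities by expanding the expectation over the value of $\tau_y$,
$$\mathbb{E}_\xi\big(\,\cdot\,;N_2<\tau_y\le k\big)=\sum_{i=N_2+1}^{k}\mathbb{E}_\xi\big(\,\cdot\,;\tau_y=i\big),$$
and estimating each summand by peeling off the survival-up-to-time-$(i-1)$ event. The common mechanism is that $\{\tau_y=i\}\subseteq\{\tau_y>i-1\}$, that $\{\tau_y>i-1\}$ is $\mathscr{F}^\xi_{i-1}$-measurable and hence independent of $X_i$ under $\mathbb{P}_\xi$, and that $\mathbb{P}_\xi(\tau_y>i-1)$ is already controlled by Lemma \ref{Lem2.2.1}: for $N_2<i\le k$ we have $N_1\le N_2\le i-1\le n$, so $U_{i-1}(\xi,y)=\mathbb{E}_\xi(y+S_{i-1};\tau_y>i-1)\le Z_{N_2,n}$ and
$$\mathbb{P}_\xi(\tau_y>i-1)\le\frac{3\,U_{i-1}(\xi,y)}{\sqrt{i-1}\,\sigma}\le\frac{3\,Z_{N_2,n}}{\sqrt{i-1}\,\sigma}.$$

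Estimate (\ref{N22}) then follows at once. For $i$ large enough $g_i>0$, so $(-X_i)1_{\{-X_i>g_i\}}\ge0$, and using $1_{\{\tau_y=i\}}\le 1_{\{\tau_y>i-1\}}$ together with the independence of $X_i$ from $\mathscr{F}^\xi_{i-1}$,
$$\mathbb{E}_\xi\big((-X_i)1_{\{-X_i>g_i\}};\tau_y=i\big)\le\mathbb{E}_\xi\big((-X_i)1_{\{-X_i>g_i\}}1_{\{\tau_y>i-1\}}\big)=\mathbb{E}_\xi\big(-X_i;-X_i>g_i\big)\,\mathbb{P}_\xi(\tau_y>i-1).$$
Inserting the bound on $\mathbb{P}_\xi(\tau_y>i-1)$ and summing over $N_2<i\le k$ gives
$$\mathbb{E}_\xi\big(-X_{\tau_y};-X_{\tau_y}>g_{\tau_y},N_2<\tau_y\le k\big)\le 3Z_{N_2,n}\sum_{i=N_2+1}^{k}\frac{\mathbb{E}_\xi(-X_i;-X_i>g_i)}{\sqrt{i-1}\,\sigma}\le 3Z_{N_2,n}\,Y_{(>N_2)},$$
which is (\ref{N22}).

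For (\ref{N21}) the same scheme meets its one real obstacle: the crude bound $\mathbb{P}_\xi(\tau_y=i)\le\mathbb{P}_\xi(\tau_y>i-1)\lesssim(i-1)^{-1/2}$ would give the divergent series $\sum g_i(i-1)^{-1/2}$, whereas the target $G_{(>N_2)}$ is built from the convergent series $\sum g_i(i-1)^{-3/2}$, so one must exploit the cancellation in $\mathbb{P}_\xi(\tau_y=i)=P_{i-1}-P_i$ with $P_j:=\mathbb{P}_\xi(\tau_y>j)$. I would sum by parts,
$$\mathbb{E}_\xi(g_{\tau_y};N_2<\tau_y\le k)=\sum_{i=N_2+1}^{k}g_i(P_{i-1}-P_i)=g_{N_2+1}P_{N_2}-g_kP_k+\sum_{i=N_2+1}^{k-1}(g_{i+1}-g_i)P_i,$$
discard $-g_kP_k\le0$, and estimate the last sum using $P_i\le 3Z_{N_2,n}/(\sqrt i\,\sigma)$ together with the elementary inequality $g_{i+1}-g_i\le g_i/(2i)$, valid for $i$ large enough that $g(x)=\sqrt{x}/\log^{1+\delta}x$ is increasing on $[i,\infty)$ (one may have to enlarge $N_1$ to this effect, which is harmless since Lemma \ref{Lem2.2.1} remains in force beyond any larger threshold). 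This yields $(g_{i+1}-g_i)P_i\le \tfrac32 Z_{N_2,n}\,g_i/((i-1)^{3/2}\sigma)$, hence the sum is $\le\tfrac32 Z_{N_2,n}G_{(>N_2)}$; a short estimate shows the boundary term $g_{N_2+1}P_{N_2}\lesssim Z_{N_2,n}/(\sigma\log^{1+\delta}N_2)$ is of smaller order than $Z_{N_2,n}G_{(>N_2)}\gtrsim Z_{N_2,n}/(\sigma(\log N_2)^{\delta})$ and so is absorbed in the slack of the constant $3$ for $N_2$ large. (Equivalently, one can establish the pointwise bound $\mathbb{P}_\xi(\tau_y=i)\le 3Z_{N_2,n}/((i-1)^{3/2}\sigma)$ by conditioning at the midpoint $m=\lceil(i-1)/2\rceil$, using $\mathbb{E}_\xi((y+S_m)1_{\{\tau_y>m\}})=U_m(\xi,y)\le Z_{N_2,n}$ and a heat-kernel bound $\mathbb{P}_\xi(\tau_y=i\mid\mathscr{F}^\xi_m)\lesssim(y+S_m)/(i-m)^{3/2}$ of the type supplied by the strong approximation of \cite{DSW18}, and then summing.) The heart of the matter is precisely this cancellation step: one cannot afford to forget that $\tau_y$ equals $i$ rather than merely exceeds $i-1$, and the summation by parts (equivalently, the $(i-m)^{-3/2}$ heat-kernel estimate) is exactly what converts the extra factor $(i-1)^{-1}$ in the hazard rate into the gap between the divergent and the convergent series. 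Estimate (\ref{N22}), by contrast, needs nothing beyond Lemma \ref{Lem2.2.1} and a single step of independence.
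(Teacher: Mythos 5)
Your proposal is correct and takes essentially the same route as the paper: for (\ref{N22}) the identical decomposition over $\{\tau_y=i\}\subseteq\{\tau_y>i-1\}$ combined with independence and Lemma \ref{Lem2.2.1}, and for (\ref{N21}) the same Abel summation with Lemma \ref{Lem2.2.1} applied to $\mathbb{P}_\xi(\tau_y>i)$. The only difference is the final bookkeeping in (\ref{N21}): the paper performs a second (reverse) summation by parts, using that $g_n$ is (eventually) increasing, to fold the boundary term $g_{N_2+1}\mathbb{P}_\xi(\tau_y>N_2)$ exactly into $3Z_{N_2,n}G_{(>N_2)}$, whereas you bound $g_{i+1}-g_i\le g_i/(2i)$ and absorb the boundary term only for $N_2$ beyond a deterministic threshold (your harmless enlargement of $N_1$, which the downstream choice of $N_2$ in Lemma \ref{Lem2.2} tolerates) --- both variants are valid.
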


\begin{proof}
	To prove $\left(\ref{N21}\right)$. Note that
	\begin{equation}\nonumber
		\begin{aligned}
			\mathbb{E}_{\xi}(g_{\tau_{y}},N_2<\tau_{y}\leq k)=&\sum_{i=N_{2}+1}^{k}g_{i}\mathbb{P}_{\xi}(\tau_{y}=i)\\
			=&\sum_{i=N_{2}+1}^{k}g_{i}\left[\mathbb{P}_{\xi}(\tau_{y}>i-1)-\mathbb{P}_{\xi}(\tau_{y}>i)\right]\\
			=&g_{N_2+1}\mathbb{P}_{\xi}(\tau_{y}>N_2)-g_k\mathbb{P}_{\xi}(\tau_{y}>k)+\sum_{i=N_{2}+1}^{k-1}\left(g_{i+1}-g_i\right)\mathbb{P}_{\xi}(\tau_{y}>i)\\
			\leq&g_{N_2+1}\mathbb{P}_{\xi}(\tau_{y}>N_2)+\sum_{i=N_{2}+1}^{k-1}\left(g_{i+1}-g_i\right)\mathbb{P}_{\xi}(\tau_{y}>i).
		\end{aligned}
	\end{equation}
	Then, by Lemma \ref{Lem2.2.1}, the definition of $Z_{k,n}$ and $G_{(>n)}$, we obtain
	\begin{equation}\nonumber
		\begin{aligned}
			&\mathbb{E}_{\xi}(g_{\tau_{y}},N_2<\tau_{y}\leq k)\\
			\leq&g_{N_2+1}\frac{3\mathbb{E}_{\xi}(y+S_{N_2};\tau_{y}>N_2)}{\sqrt{N_2}\sigma}+\sum_{i=N_{2}+1}^{k-1}\left(g_{i+1}-g_i\right)\frac{3\mathbb{E}_{\xi}(y+S_i;\tau_{y}>i)}{\sqrt{i}\sigma}\\
			\leq&g_{N_2+1}\frac{3Z_{N_2,n}}{\sqrt{N_2}\sigma}+\sum_{i=N_{2}+1}^{k-1}\left(g_{i+1}-g_i\right)\frac{3Z_{N_2,n}}{\sqrt{i}\sigma}\\
			=&\sum_{i=N_{2}+1}^{k}g_i\left(\frac{3Z_{N_2,n}}{\sqrt{i-1}\sigma}-\frac{3Z_{N_2,n}}{\sqrt{i}\sigma}\right)+g_{k}\frac{3Z_{N_2,n}}{\sqrt{k}\sigma}\\
			\leq&\sum_{i>N_{2}}g_i\left(\frac{3Z_{N_2,n}}{\sqrt{i-1}\sigma}-\frac{3Z_{N_2,n}}{\sqrt{i}\sigma}\right)\\
			=&3Z_{N_2,n}\sum_{i>N_{2}}\frac{g_i}{\sqrt{i-1}\sqrt{i}(\sqrt{i-1}+\sqrt{i})\sigma}\\
			\leq&3Z_{N_2,n}\sum_{i>N_{2}}\frac{g_i}{(i-1)^{\frac{3}{2}}\sigma}\\
			=&3Z_{N_2,n}G_{(>N_2)},
		\end{aligned}
	\end{equation}
	where the second to last inequality holds since $g_n$ is positive and increasing in $n$.
	
	To prove $\left(\ref{N21}\right)$. Observe that
	\begin{equation}\nonumber
		\begin{aligned}
			\mathbb{E}_{\xi}(-X_{\tau_{y}};-X_{\tau_{y}}>g_{\tau_{y}},N_2<\tau_{y}\leq k)=&\sum_{i=N_{2}+1}^{k}\mathbb{E}_{\xi}(-X_{i};-X_{i}>g_{i},\tau_{y}=i)\\
			\leq&\sum_{i=N_{2}+1}^{k}\mathbb{E}_{\xi}(-X_{i};-X_{i}>g_{i},\tau_{y}>i-1)\\
			=&\sum_{i=N_{2}+1}^{k}\mathbb{E}_{\xi}(-X_{i};-X_{i}>g_{i})\mathbb{P}_{\xi}(\tau_{y}>i-1).
		\end{aligned}
	\end{equation}
	Using again Lemma \ref{Lem2.2.1}, the definition of $Z_{k,n}$ and $Y_{(>n)}$, we get
	\begin{equation}\nonumber
		\begin{aligned}
			\mathbb{E}_{\xi}(-X_{\tau_{y}};-X_{\tau_{y}}>g_{\tau_{y}},N_2<\tau_{y}\leq k)\leq&\sum_{i=N_{2}+1}^{k}\mathbb{E}_{\xi}(-X_{i};-X_{i}>g_{i})\frac{3\mathbb{E}_{\xi}(y+S_{i-1};\tau_{y}>i-1)}{\sqrt{i-1}\sigma}\\
			\leq&3Z_{N_2,n}\sum_{i>N_{2}}\frac{\mathbb{E}_{\xi}(-X_{i};-X_{i}>g_{i})}{\sqrt{i-1}\sigma}\\
			=&3Z_{N_2,n}Y_{(>N_2)}.
		\end{aligned}
	\end{equation}
\end{proof}

\begin{lemma}\label{Lem2.2}
	For any $y\geq0$, we have $\mathbb{E}_\xi |S_{\tau_{y}}|<\infty$, $P$-a.s..
\end{lemma}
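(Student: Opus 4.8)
The plan is to bound $\mathbb{E}_\xi|S_{\tau_y}|$ by splitting according to the size of the overshoot increment $-X_{\tau_y}$, exactly along the lines of the truncation level $g_n$ introduced before Lemma \ref{Lem2.2.2}. Write $|S_{\tau_y}| \le y + S_{\tau_y - 1} + (-X_{\tau_y})$ on the event $\{\tau_y < \infty\}$ (which has full $\mathbb{P}_\xi$-probability by Lemma \ref{Lem2.1}), and note that $0 \le y + S_{\tau_y-1}$. The contribution of $y + S_{\tau_y-1}$ is controlled by observing $\mathbb{E}_\xi(y+S_{\tau_y-1}; \tau_y = i) \le \mathbb{E}_\xi(y + S_{i-1}; \tau_y > i-1)$ and summing; the analysis of $-X_{\tau_y}$ is where the two estimates of Lemma \ref{Lem2.2.2} come in. On $\{-X_{\tau_y} \le g_{\tau_y}\}$ the overshoot is bounded by \eqref{N21}, and on $\{-X_{\tau_y} > g_{\tau_y}\}$ by \eqref{N22}.

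First I would fix $y \ge 0$, work with a realization of $\xi$ for which all the preceding almost-sure statements hold, and take $N_1$ as in Lemma \ref{Lem2.2.1}. The crucial point is to check that $G_{(>N)} \to 0$ and $Y_{(>N)} \to 0$ as $N \to \infty$ for $P$-a.e.\ $\xi$. For $G_{(>N)} = \sum_{i>N} g_i / ((i-1)^{3/2}\sigma)$ with $g_i = \sqrt{i}/\log^{1+\delta} i$, the summand is of order $i^{-1}\log^{-(1+\delta)} i$, so the series converges and the tail vanishes — this is deterministic. For $Y_{(>N)} = \sum_{i>N} \mathbb{E}_\xi(-X_i; -X_i > g_i)/(\sqrt{i-1}\sigma)$ I would use the moment assumption \eqref{assumption}: by stationarity and $\mathbb{E}(X_1^{2+\epsilon}) < \infty$, a Markov/Hölder estimate gives $\mathbb{E}_\xi(-X_i; -X_i > g_i) \le \mathbb{E}_\xi(|X_i|^{2+\epsilon}) / g_i^{1+\epsilon}$, so the $i$-th term is bounded by a constant times $\mathbb{E}_\xi(|X_i|^{2+\epsilon}) \cdot i^{-1-\epsilon/2}\log^{(1+\delta)(1+\epsilon)} i$; since $\{\mathbb{E}_\xi(|X_i|^{2+\epsilon})\}_i$ is stationary with finite mean, the ergodic theorem (or a Borel–Cantelli argument) ensures $\mathbb{E}_\xi(|X_i|^{2+\epsilon}) = o(i^{\epsilon/4})$ $P$-a.s., which makes the series converge $P$-a.s.\ and its tail tend to $0$.

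With this in hand I would argue as follows. Choose $N_2 \ge N_1$ (depending on $\xi$) so large that $3G_{(>N_2)} \le \tfrac14$ and $3Y_{(>N_2)} \le \tfrac14$. For any $n \ge k > N_2$, collecting the three pieces gives
\begin{equation}\nonumber
\mathbb{E}_\xi\big((y + S_{\tau_y-1}) + (-X_{\tau_y}); N_2 < \tau_y \le k\big) \le Z_{N_2,n} + 3Z_{N_2,n}G_{(>N_2)} + 3Z_{N_2,n}Y_{(>N_2)} \le \tfrac32 Z_{N_2,n},
\end{equation}
and since $\mathbb{E}_\xi(y + S_k; \tau_y > k) = \mathbb{E}_\xi(y+S_k) - \mathbb{E}_\xi(y+S_k; \tau_y \le k) \le y + \mathbb{E}_\xi(y+S_{\tau_y - 1} + (-X_{\tau_y}); \tau_y \le k)$ using $\mathbb{E}_\xi X_1 = \cdots = 0$ and optional stopping on the bounded-time truncations, one gets a self-improving inequality: splitting $\{\tau_y \le k\} = \{\tau_y \le N_2\} \cup \{N_2 < \tau_y \le k\}$, bounding the first part by the finite quantity $C(\xi, N_2) := \mathbb{E}_\xi(y + S_{\tau_y-1} + (-X_{\tau_y}); \tau_y \le N_2) < \infty$, yields $Z_{N_2, n} \le y + C(\xi,N_2) + \tfrac32 Z_{N_2,n}$ — which is useless as stated because the constant is on the wrong side. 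The remedy, as in \cite{DSW18}, is to keep $Z_{N_2,n}$ as a maximum over $k \in [N_2, n]$: taking the max over $k$ on the left of the bound for $\mathbb{E}_\xi(y+S_k;\tau_y>k)$ gives $Z_{N_2,n} \le y + C(\xi,N_2) + \tfrac32 Z_{N_2,n}$ only if one is careless; instead one isolates $Z_{N_2,n}$ on the left with coefficient $1$ and the error terms involve $Z_{N_2,n}$ with total coefficient $3G_{(>N_2)} + 3Y_{(>N_2)} \le \tfrac12 < 1$, so $\tfrac12 Z_{N_2,n} \le y + C(\xi,N_2)$, i.e.\ $Z_{N_2,n} \le 2(y + C(\xi,N_2))$ uniformly in $n$. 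Letting $n \to \infty$ and then $k \to \infty$ through monotone convergence gives $\mathbb{E}_\xi(y + S_{\tau_y-1} + (-X_{\tau_y})) < \infty$, hence $\mathbb{E}_\xi|S_{\tau_y}| < \infty$.

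The main obstacle is the bookkeeping in that last step: one must be careful that the "bad" constant only multiplies $Z_{N_2,n}$ with a coefficient strictly below $1$ (achieved by pushing $N_2$ out far enough), and that all the quantities indexed by $n$ stay finite before the limit is taken — which is why the truncation at $g_n$ and the uniform-in-$n$ bound via $Z_{N_2,n}$ are essential rather than a direct estimate. The almost-sure control of $Y_{(>N)}$ via the $(2+\epsilon)$-moment and the ergodic theorem is the only genuinely probabilistic input; everything else is the deterministic summation already carried out in Lemma \ref{Lem2.2.2}.
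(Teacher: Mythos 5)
Your overall scaffolding is the same as the paper's (truncation at $g_n$, the quantities $G_{(>N_2)}$, $Y_{(>N_2)}$, $Z_{N_2,n}$, Lemma \ref{Lem2.2.1}--\ref{Lem2.2.2}, optional stopping, and a self-improving bound on $Z_{N_2,n}$ with contraction coefficient $3(G_{(>N_2)}+Y_{(>N_2)})\le\frac12$), but there is a genuine gap in how you treat the undershoot $y+S_{\tau_y-1}$. Your displayed estimate claims $\mathbb{E}_\xi\bigl((y+S_{\tau_y-1})+(-X_{\tau_y});\,N_2<\tau_y\le k\bigr)\le Z_{N_2,n}+3Z_{N_2,n}G_{(>N_2)}+3Z_{N_2,n}Y_{(>N_2)}$, where the first term is supposed to come from summing $\mathbb{E}_\xi(y+S_{i-1};\tau_y=i)\le\mathbb{E}_\xi(y+S_{i-1};\tau_y>i-1)$ over $N_2<i\le k$. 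That sum has up to $k-N_2$ terms, each of size roughly $U_{i-1}(\xi,y)$, so it is of order $(k-N_2)Z_{N_2,n}$, not $Z_{N_2,n}$; and even if it were $Z_{N_2,n}$, the total coefficient would be $1+3G_{(>N_2)}+3Y_{(>N_2)}\ge1$ and the bootstrap collapses (you notice this, but your ``remedy'' simply asserts that the coefficient is $3G_{(>N_2)}+3Y_{(>N_2)}$ without explaining how the coefficient-one term disappears). The missing idea, which is exactly what the paper uses, is that the undershoot should never enter: since $y+S_{\tau_y-1}>0$ on $\{\tau_y=i\}$, one has the pointwise bound $-y-S_{\tau_y}=-y-S_{\tau_y-1}-X_{\tau_y}<-X_{\tau_y}\le g_{\tau_y}+(-X_{\tau_y})1_{\{-X_{\tau_y}>g_{\tau_y}\}}$, so only \eqref{N21} and \eqref{N22} appear and the contraction constant really is $3(G_{(>N_2)}+Y_{(>N_2)})\le\frac12$. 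Since by optional stopping $\mathbb{E}_\xi(y+S_k;\tau_y>k)=y+\mathbb{E}_\xi(-y-S_{\tau_y};\tau_y\le k)$ and $\mathbb{E}_\xi|S_{\tau_y}|\le y+\mathbb{E}_\xi(-y-S_{\tau_y})$, you never need $\mathbb{E}_\xi(y+S_{\tau_y-1})$ at all. (Relatedly, your pointwise claim $|S_{\tau_y}|\le y+S_{\tau_y-1}+(-X_{\tau_y})$ fails when $S_{\tau_y-1}\in(-y,-y/2)$; the correct statement is the one above.)

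A secondary, fixable gap is the almost-sure control of $Y_{(>n)}$: the ergodic theorem applied to the stationary sequence $\{\mathbb{E}_\xi(|X_i|^{2+\epsilon})\}_i$ only yields $\mathbb{E}_\xi(|X_i|^{2+\epsilon})=o(i)$ $P$-a.s., not $o(i^{\epsilon/4})$, and a Borel--Cantelli bound on $P(\mathbb{E}_\xi(|X_i|^{2+\epsilon})>i^{\epsilon/4})$ is not summable for small $\epsilon$; with only $o(i)$ your series bound $\sum_i \mathbb{E}_\xi(|X_i|^{2+\epsilon})\,i^{-1-\epsilon/2}(\log i)^{(1+\delta)(1+\epsilon)}$ is not shown to converge. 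The quickest repair is to take the $P$-expectation of this series and use stationarity and Fubini, so that it is finite for $P$-a.e.\ $\xi$; the paper instead combines Borel--Cantelli for $\sum_i\mathbb{P}_\xi(-X_i>g_i)$ with a H\"older estimate. With these two repairs your argument coincides with the paper's proof.
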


\begin{proof}
	First note that since $\sum_{i=2}^{\infty}\frac{g_i}{(i-1)^{\frac{3}{2}}\sigma}<\infty$, we get $G_{(>n)}\to0$ as $n\to\infty$, and (\ref{assumption}) implies that, for almost all $\xi$, $Y_{(>n)}\to0$ as $n\to\infty$. In fact, by (\ref{assumption}), we get $\sum_{i=1}^{\infty}\mathbb{P}\left(|X_i|>i^{\frac{1}{2+\epsilon}}\right)<\infty$. Hence, $\sum_{i=1}^{\infty}\mathbb{P}\left(-X_i>g_i\right)<\infty$. So, for almost all $\xi$, $\sum_{i=1}^{\infty}\mathbb{P}_{\xi}\left(-X_i>g_i\right)<\infty$. Since by H\"{o}lder inequality (let $p=1+\frac{\epsilon}{2}$ and $\frac{1}{p}+\frac{1}{q}=1$), we obtain $\mathbb{E}_{\xi}(-X_i;-X_i>g_i)\leq\mathbb{E}_{\xi}(\frac{|X_i|^2}{g_i};-X_i>g_i)\leq\frac{\left(\mathbb{E}_{\xi}(|X_i|^{2p})\right)^{\frac{1}{p}}}{g_i}\left(\mathbb{P}_{\xi}(-X_i>g_i)\right)^{\frac{1}{q}}$. Thus, for almost all $\xi$, we have  $\sum_{i>n}\frac{\mathbb{E}_{\xi}(-X_i;-X_i>g_i)}{\sqrt{i-1}\sigma}<\infty$, this implies $Y_{(>n)}\to0$ as $n\to\infty$. As a result, for almost all $\xi$, we can choose $N_2:=\min\{n>N_1:G_{(>n)}+Y_{(>n)}\leq\frac{1}{6}\}<\infty$.
	
	Next, since $y+S_{i-1}>0$ in the event $\{\tau_{y}=i\}$, we get $$-y-S_{\tau_{y}}=-y-S_{\tau_{y}-1}-X_{\tau_{y}}<-X_{\tau_{y}}\leq g_{\tau_{y}}-X_{\tau_{y}}1_{\{-X_{\tau_{y}}>g_{\tau_{y}}\}}.$$
	From this inequality and Lemma \ref{Lem2.2.2}, for $N_2\leq k\leq n$, we obtain
	$$
	\begin{aligned}
		&\mathbb{E}_{\xi}\left(-y-S_{\tau_{y}};\tau_{y}\leq k\right)\\
		\leq &\mathbb{E}_{\xi}\left(-y-S_{\tau_{y}};\tau_{y}\leq N_2\right)+\mathbb{E}_{\xi}\left(g_{\tau_{y}};N_2<\tau_{y}\leq k\right)+\mathbb{E}_{\xi}(-X_{\tau_{y}};-X_{\tau_{y}}>g_{\tau_{y}},N_2<\tau_{y}\leq k)\\
		\leq &C_1+3Z_{N_2,n}G_{(>N_2)}+3Z_{N_2,n}Y_{(>N_2)}\\
		\leq &C_1+\frac{Z_{N_2,n}}{2},
	\end{aligned}
	$$
	where $C_1:=\mathbb{E}_{\xi}\left(-y-S_{\tau_{y}};\tau_{y}\leq N_2\right)$. Note that $C$ depends on $\xi$ and $C<\infty$ for almost all $\xi$. In fact,
	$$
	\mathbb{E}_{\xi}\left(-y-S_{\tau_{y}};\tau_{y}\leq N_2\right)=\sum_{i=1}^{N_2}\mathbb{E}_{\xi}\left(-y-S_{i};\tau_{y}=i\right)\leq\sum_{i=1}^{N_2}\mathbb{E}_{\xi}\left(\left(y+S_{i}\right)^{-}\right)<\infty.
	$$
	
	Then, by optional stopping theorem (see (\ref{optional stopping}) below), for $N_2\leq k\leq n$, we get $$\mathbb{E}_\xi(y+S_{k};\tau_{y}>k)=y+\mathbb{E}_\xi(-y-S_{\tau_{y}};\tau_{y}\leq k)\leq y+C_1+\frac{Z_{N_2,n}}{2}.$$
	Taking maximum in last inequality with respect to $k\in[N_2,n]$, we get $Z_{N_2,n}\leq y+C_1+\frac{Z_{N_2,n}}{2}$, which implies $Z_{N_2,n}\leq 2y+2C_1$.
	
	Therefore, $\mathbb{E}_{\xi}\left(-y-S_{\tau_{y}};\tau_{y}\leq n\right)\leq y+2C_1$. By the monotone convergence theorem, we have $$0\leq\mathbb{E}_{\xi}\left(-y-S_{\tau_{y}}\right)=\lim_{n\to\infty}\mathbb{E}_{\xi}\left(-y-S_{\tau_{y}};\tau_{y}\leq n\right)\leq y+2C_1,$$
	which implies $0\leq-\mathbb{E}_{\xi}\left(S_{\tau_{y}}\right)\leq 2y+2C_1$.
\end{proof}

\begin{proof}[Proof of Proposition \ref{Prop1}]
	Since $\{S_n\}_{n\in\mathbb{N}}$ is a martingale under $\mathbb{P}_\xi$, for any $n\geq1$, by the optional stopping theorem,
	$$0=\mathbb{E}_\xi(S_{n\wedge\tau_{y}})=\mathbb{E}_\xi(S_{\tau_{y}};\tau_{y}\leq n)+\mathbb{E}_\xi(S_{n};\tau_{y}>n).$$ Therefore, 
	From this equality, we obtain
	\begin{equation}\label{optional stopping}
		\mathbb{E}_\xi(y+S_{n};\tau_{y}>n)=y\mathbb{P}_\xi(\tau_{y}>n)-\mathbb{E}_\xi(S_{\tau_{y}};\tau_{y}\leq n).
	\end{equation}
	By Lemma \ref{Lem2.1}, Lemma \ref{Lem2.2} and the dominated convergence theorem, $$\lim_{n\to\infty}\mathbb{E}_\xi(y+S_{n};\tau_{y}>n)=-\mathbb{E}_\xi(S_{\tau_{y}}),$$ Proposition \ref{Prop1} follows.
\end{proof}


\subsection{Proof of Theorem \ref{Th1}}

Although, the proof of the quenched harmonic property is simple: express $U_{n+1}(\xi,y)$ as the integral of $U_{n}(\theta\xi,y')$ by the Markov property, then the quenched harmonic property follows by the dominated convergence theorem. This idea is based on the fact that the random environment is an infinite sequence of random elements. By shifting the environment, we obtain the quenched harmonic property rather than annealed harmonic property.

\begin{proof}[Proof of claim 1 of Theorem \ref{Th1}]
	By the Markov property, and under $\mathbb{P}_{\theta\xi}$, the random walk $S$ is  $S_{n}=\sum_{i=2}^{n+1}X_{i}$, we obtain, for $n\geq1$,
		\begin{equation}\label{harmonic}
			\begin{aligned}
				U_{n+1}(\xi,y)=&\mathbb{E}_\xi\left(y+S_{n+1}; \tau_{y}>n+1\right)\\
				=&\mathbb{E}_\xi\left[\mathbb{E}_\xi\left(y+S_{n+1}; \tau_{y}>n+1\mid \mathscr{F}^{\xi}_{1}\right)\right]\\
				=&\int_{\mathbb{R}}\mathbb{E}_\xi\left(y'+\sum_{i=2}^{n+1}X_{i}; \tau_{y'}>n\mid \mathscr{F}^{\xi}_{1}\right)\mathbb{P}_\xi(y+S_1\in dy'; \tau_{y}>1)\\
				=&\int_{\mathbb{R}}\mathbb{E}_{\theta\xi}\left(y'+S_{n}; \tau_{y'}>n\right)\mathbb{P}_\xi(y+S_1\in dy'; \tau_{y}>1)\\
				=&\int_{\mathbb{R}}U_n(\theta \xi,y')\mathbb{P}_\xi(y+S_1\in dy'; \tau_{y}>1).
			\end{aligned}
	\end{equation}
	
	Note that $U_{n}(\xi,y)$ is non-decreasing in $n$, since the sequence $\{(y+S_n)1_{\{\tau_{y}>n\}}\}$ is a submartingale. In fact, we have
	$$
	\begin{aligned}
		&\mathbb{E}_{\xi}\left(\left(y+S_{n+1}\right)1_{\{\tau_{y}>n+1\}}\mid\mathscr{F}^{\xi}_{n}\right)\\
		=&\mathbb{E}_{\xi}\left(\left(y+S_{n+1}\right)\left(1_{\{\tau_{y}>n\}}-1_{\{\tau_{y}=n+1\}}\right)\mid\mathscr{F}^{\xi}_{n}\right)\\
		=&\mathbb{E}_{\xi}\left(y+S_{n+1}\mid\mathscr{F}^{\xi}_{n}\right)1_{\{\tau_{y}>n\}}-\mathbb{E}_{\xi}\left(\left(y+S_{n+1}\right)1_{\{\tau_{y}=n+1\}}\mid\mathscr{F}^{\xi}_{n}\right)\\
		=&(y+S_{n})1_{\{\tau_{y}>n\}}-\mathbb{E}_{\xi}\left((y+S_{n+1})1_{\{\tau_{y}=n+1\}}\mid\mathscr{F}^{\xi}_{n}\right)\\
		\geq&(y+S_{n})1_{\{\tau_{y}>n\}}
	\end{aligned}
	$$
	where the last inequality follows from the fact that $y+S_{n+1}\leq0$ on the event $\{\tau_{y}=n+1\}$. This yields the submartingale property.
	
	Taking the limit in (\ref{harmonic}), by Lemma \ref{Lem2.2} and the dominated convergence theorem, we get
	\begin{equation}\nonumber
		\begin{aligned}
			U(\xi,y)=&\int_{\mathbb{R}}U(\theta \xi,y')\mathbb{P}_\xi(y+S_1\in dy'; \tau_{y}>1)\\
			=&\mathbb{E}_{\xi}\left[U(\theta \xi,y+S_{1});\tau_{y}>1\right],
		\end{aligned}
	\end{equation}
	claim 1 follows.
\end{proof}

To obtain the asymptotic behaviour of the quenched harmonic function, we construct a non-negative supermartingale in the following lemma.

\begin{lemma}
	For $C\in\mathbb{R}_{+}$ and $K\in\mathbb{N}^*$, let
	\begin{equation}\nonumber
		f(x):=\begin{cases}
			(C+x)^{1-\frac{1}{2K+1}}, &x>0,\\
			|x|, &x\leq0.
		\end{cases}
	\end{equation}
	Then for almost all $\xi$, there exist sufficiently large $C$ and $K$ (both depending on $\xi$) such that $f\left(y+S_{n\wedge\tau_y}\right)$ is a non-negative supermartingale under $\mathbb{P}_\xi$.
\end{lemma}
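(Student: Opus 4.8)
The plan is to verify the two defining properties of a supermartingale directly: non-negativity is free, and the drift inequality reduces to a pointwise one-step estimate, which I would then establish by a Taylor expansion together with an ergodic-theoretic choice of the parameters. Set $\alpha:=1-\tfrac1{2K+1}\in(0,1)$. Since $f\ge0$ the process $f(y+S_{n\wedge\tau_y})$ is non-negative. For the supermartingale property under $\mathbb P_\xi$ relative to $(\mathscr F^\xi_n)$: on $\{\tau_y\le n\}$ the process is frozen, so the required inequality is an equality; on $\{\tau_y>n\}$ the level $x:=y+S_n$ is $\mathscr F^\xi_n$-measurable and $>0$, the increment $X_{n+1}$ has law $\eta_{n+1}$ and is independent of $\mathscr F^\xi_n$, and — precisely because the branch of $f$ on $(-\infty,0]$ is $|x|$, which matches the stopped value at $\tau_y$ — the conditional expectation of $f(y+S_{(n+1)\wedge\tau_y})$ equals $\int_{\mathbb R}f(x+z)\,\eta_{n+1}(dz)$. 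Thus the whole problem reduces to finding $C=C(\xi)$ and $K=K(\xi)$ such that, for $P$-a.s.\ $\xi$, every $n\ge1$ and every level $x$ that $y+S_{n-1}$ can take on $\{\tau_y>n-1\}$,
$$\int_{\mathbb R}f(x+z)\,\eta_n(dz)\ \le\ f(x)=(C+x)^\alpha .$$

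For $x$ above a $\xi$-dependent threshold I would split the integral over $\{z\le-x\}$, $\{-x<z,\ |z|\le x/2\}$ and $\{|z|>x/2\}$: on the first use $f(x+z)=|x+z|\le|z|$; on the middle band use a second-order Taylor expansion of $t\mapsto(C+t)^\alpha$ together with $\mathbb E_\xi X_n=0$, producing the main term $(C+x)^\alpha$, a strictly negative curvature term comparable to $-(C+x)^{\alpha-2}\mathbb E_\xi(X_n^2)$, and a remainder in truncated second moments; on the last use the crude bound $(C+x+z)^\alpha\le C^\alpha+(3|z|)^\alpha$ and $1<(2|z|/x)^{2+\epsilon}$. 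This yields an estimate of the form
$$\int_{\mathbb R}f(x+z)\,\eta_n(dz)\ \le\ (C+x)^\alpha-c_\alpha\,\mathbb E_\xi(X_n^2)\,(C+x)^{\alpha-2}+R_n(x),$$
with $c_\alpha>0$ and $R_n(x)$ a finite sum of terms $(\text{power of }C)\cdot(C+x)^{\alpha-j}\cdot x^{-(1+\epsilon)}$ (or $x^{-(2+\epsilon)}$) times a moment of $\eta_n$ of order $\le2+\epsilon$. For $x$ below the threshold one instead uses $f(x+z)=|x+z|\le|z|$ on $\{z\le-x\}$ and the tangent bound for $t\mapsto(C+t)^\alpha$ at $C+x$ on $\{z>-x\}$, which reduces the inequality to $\big(1+\alpha(C+x)^{\alpha-1}\big)\,\mathbb E_\xi(|X_n|;X_n\le-x)\le(C+x)^\alpha\,\mathbb P_\xi(X_n\le-x)$.

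Next I would choose the parameters. First fix $K$, hence $\alpha$, so large that $\alpha(1+\epsilon)>1$, i.e.\ $2K+1>(1+\epsilon)/\epsilon$; this is exactly the balance that makes $R_n(x)$ negligible against the curvature term once $x$ is large, and is the reason for the exponent $1-\tfrac1{2K+1}$. Since $\xi$ is stationary and ergodic, $\big(\mathbb E_\xi(X_n^2)\big)_n$ and $\big(\mathbb E_\xi|X_n|^{2+\epsilon}\big)_n$ are stationary ergodic with finite mean, so by the ergodic theorem $N^{-1}\sum_{n\le N}\mathbb E_\xi(X_n^2)\to\sigma^2$ and, via Borel–Cantelli, $\mathbb E_\xi|X_n|^{2+\epsilon}=o(n)$ for $P$-a.s.\ $\xi$. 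These controls, together with the growth of the levels $y+S_{n-1}$ reachable on $\{\tau_y>n-1\}$, let one take $C=C(\xi)$ so large that $c_\alpha\,\mathbb E_\xi(X_n^2)(C+x)^{\alpha-2}\ge R_n(x)$ in the large-$x$ regime and that the reduced inequality of the previous paragraph holds in the small-$x$ regime, uniformly in $n$; this gives the one-step inequality and hence the lemma.

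The hard part will be the uniformity in $n$ and in the level $x$ at the same time. The quenched moments $\mathbb E_\xi(X_n^2)$ and $\mathbb E_\xi|X_n|^{2+\epsilon}$ are controlled only in an averaged, sub-linear sense along the environment, not boundedly, so a naive supremum is not available; one must couple the growth of the reachable levels with the growth of these moments, which is precisely why $\alpha$ (hence $K$) has to be tuned against the moment exponent $2+\epsilon$ — a universal exponent would not suffice. The most delicate regime is that of small levels $x$, where $(C+x)^\alpha\approx C^\alpha$ can barely dominate the overshoot contribution $\mathbb E_\xi(|X_n|;X_n\le-x)$: here the ergodicity of the environment and the restriction to levels the stopped walk can actually visit have to be used in tandem, and this is where essentially all of the effort — and the eventual size of $C(\xi)$ — is concentrated.
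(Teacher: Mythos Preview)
Your route is genuinely different from the paper's. The paper never Taylor-expands: it applies Jensen's inequality for the concave map $t\mapsto t^{\alpha}$ to get
\[
\mathbb{E}_\xi\!\left[(C+y+S_{n+1})^{\alpha}\,1_{\{\tau_y>n+1\}}\,\middle|\,\mathscr{F}^\xi_n\right]
\ \le\
\Big(\mathbb{E}_\xi\!\left[(C+y+S_{n+1})\,1_{\{\tau_y>n+1\}}\,\middle|\,\mathscr{F}^\xi_n\right]\Big)^{\alpha},
\]
and then uses the martingale property of $S$ to rewrite the inside as $(C+y+S_n)-\mathbb{E}_\xi[(C+y+S_{n+1})1_{\{\tau_y=n+1\}}\mid\mathscr{F}^\xi_n]$ on $\{\tau_y>n\}$. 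This instantly handles the ``interior'' part and isolates a single boundary term at $\tau_y=n+1$, which involves only first-moment tail quantities of $\eta_{n+1}$ --- never the full $(2+\epsilon)$-moment that your second-order expansion drags in. That is the structural advantage of Jensen over Taylor here.

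Your proposal, however, has a concrete gap exactly where you flag the difficulty. You plan to ``couple the growth of the reachable levels with the growth of these moments'', but the reachable levels do \emph{not} grow: on $\{\tau_y>n-1\}$ the value $x=y+S_{n-1}$ can sit arbitrarily close to $0$ for every $n$, so your one-step inequality must hold for all $x>0$ at every time $n$. In your large-$x$ estimate the remainder $R_n(x)$ carries $\mathbb{E}_\xi|X_n|^{2+\epsilon}$, and your small-$x$ reduction asks for $\big(1+\alpha(C+x)^{\alpha-1}\big)\,\mathbb{E}_\xi(|X_n|;X_n\le -x)\le (C+x)^{\alpha}\,\mathbb{P}_\xi(X_n\le -x)$, i.e.\ a bound on $\mathbb{E}_\xi\big[|X_n|\,\big|\,X_n\le -x\big]$. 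Neither of these is uniformly bounded in $n$ under the stated hypotheses: only the \emph{annealed} $(2+\epsilon)$-moment is finite, so $(\mathbb{E}_\xi|X_n|^{2+\epsilon})_{n\ge1}$ is stationary ergodic with finite mean but a priori unbounded values, and the $o(n)$ control you invoke cannot be traded against a level $x$ that may remain near $0$. A fixed $(C,K)$ therefore cannot force your pointwise inequality simultaneously for all $n$, and the argument as written does not close. The paper's Jensen reduction avoids the higher-moment remainder altogether, which is why it is the natural device in this setting --- though, to be fair, its own handling of the residual boundary inequality is itself quite terse on precisely this uniformity-in-$n$ point.
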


\begin{proof}
	Fix each realization of $\xi$, we first show the integrability of $f\left(y+S_{n\wedge\tau_y}\right)$ under $\mathbb{P}_\xi$. For each $n$, we have
	$$
	\begin{aligned}
		\mathbb{E}_{\xi}f\left(y+S_{n\wedge\tau_y}\right)&=\mathbb{E}_{\xi}\left(f\left(y+S_{n}\right);\tau_{y}>n\right)+\mathbb{E}_{\xi}\left(f\left(y+S_{\tau_y}\right);\tau_{y}\leq n\right)\\
		&=\mathbb{E}_{\xi}\left((C+y+S_{n})^{1-\frac{1}{2K+1}};\tau_{y}>n\right)+\mathbb{E}_{\xi}\left(\left|y+S_{\tau_y}\right|;\tau_{y}\leq n\right)\\		&\leq\mathbb{E}_{\xi}\left(C+y+S_{n};\tau_{y}>n\right)-\mathbb{E}_{\xi}\left(y+S_{\tau_y};\tau_{y}\leq n\right)\\
		&=C\mathbb{P}_{\xi}\left(\tau_{y}>n\right)+y\mathbb{P}_{\xi}\left(\tau_{y}>n\right)-y\mathbb{P}_{\xi}\left(\tau_{y}\leq n\right)-2\mathbb{E}_{\xi}\left(S_{\tau_y};\tau_{y}\leq n\right)\\
		&\leq C+y-2\mathbb{E}_{\xi}\left(S_{\tau_{y}}\right),
	\end{aligned}
	$$
	where the last equality follows from $\mathbb{E}_\xi(S_{n};\tau_{y}>n)=-\mathbb{E}_\xi(S_{\tau_{y}};\tau_{y}\leq n)$.
	Therefore, by Lemma \ref{Lem2.2}, we obtain that, for almost all $\xi$, $\mathbb{E}_{\xi}f\left(y+S_{n\wedge\tau_y}\right)<\infty$.
	
	Next, we shall prove the supermartingale property, that is,
	\begin{equation}\label{supermartingale property}
		\begin{aligned}
			\mathbb{E}_{\xi}\left[f\left(y+S_{(n+1)\wedge\tau_y}\right)|\mathscr{F}^{\xi}_{n}\right]&\leq f\left(y+S_{n\wedge\tau_y}\right)\\
			&=\left(C+y+S_{n}\right)^{1-\frac{1}{2K+1}}1_{\{\tau_{y}>n\}}+\left|y+S_{\tau_y}\right|1_{\{\tau_{y}\leq n\}}.
		\end{aligned}
	\end{equation}
	By the definition of $\tau_{y}$ and $f(x)$, we get
	\begin{equation}\label{decomposition at tau}
		\begin{aligned}
			\mathbb{E}_{\xi}\left[f\left(y+S_{(n+1)\wedge\tau_y}\right)|\mathscr{F}^{\xi}_{n}\right]=&\mathbb{E}_{\xi}\left[(C+y+S_{n+1})^{1-\frac{1}{2K+1}}1_{\{\tau_{y}>n+1\}}|\mathscr{F}^{\xi}_{n}\right]\\
			&+\mathbb{E}_{\xi}\left[\left|y+S_{\tau_y}\right|1_{\{\tau_{y}\leq n+1\}}|\mathscr{F}^{\xi}_{n}\right].
		\end{aligned}
	\end{equation}
	For the first term in $\left(\ref{decomposition at tau}\right)$, we obtain
	$$
	\begin{aligned}
		&\mathbb{E}_{\xi}\left[(C+y+S_{n+1})^{1-\frac{1}{2K+1}}1_{\{\tau_{y}>n+1\}}|\mathscr{F}^{\xi}_{n}\right]\\
		\leq&\left(\mathbb{E}_{\xi}\left[(C+y+S_{n+1})1_{\{\tau_{y}>n+1\}}|\mathscr{F}^{\xi}_{n}\right]\right)^{1-\frac{1}{2K+1}}\\
		=&\left(\mathbb{E}_{\xi}\left[(C+y+S_{n+1})\left(1_{\{\tau_{y}>n\}}-1_{\{\tau_{y}=n+1\}}\right)|\mathscr{F}^{\xi}_{n}\right]\right)^{1-\frac{1}{2K+1}}\\
		=&\left(\mathbb{E}_{\xi}\left[(C+y+S_{n+1})|\mathscr{F}^{\xi}_{n}\right]\right)^{1-\frac{1}{2K+1}}1_{\{\tau_{y}>n\}}-\left(\mathbb{E}_{\xi}\left[(C+y+S_{n+1})1_{\{\tau_{y}=n+1\}}|\mathscr{F}^{\xi}_{n}\right]\right)^{1-\frac{1}{2K+1}}\\
		=&\left(C+y+S_{n}\right)^{1-\frac{1}{2K+1}}1_{\{\tau_{y}>n\}}-\left(\mathbb{E}_{\xi}\left[(C+y+S_{n+1})1_{\{\tau_{y}=n+1\}}|\mathscr{F}^{\xi}_{n}\right]\right)^{1-\frac{1}{2K+1}},
	\end{aligned}
	$$
	where the first inequality follows by the Jensen's inequality.	For the second term in $\left(\ref{decomposition at tau}\right)$, we have
	$$
	\begin{aligned}
		&\mathbb{E}_{\xi}\left[\left|y+S_{\tau_y}\right|1_{\{\tau_{y}\leq n+1\}}|\mathscr{F}^{\xi}_{n}\right]\\
		=&\mathbb{E}_{\xi}\left[\left|y+S_{\tau_y}\right|\left(1_{\{\tau_{y}\leq n\}}+1_{\{\tau_{y}=n+1\}}\right)|\mathscr{F}^{\xi}_{n}\right]\\
		=&\left|y+S_{\tau_y}\right|1_{\{\tau_{y}\leq n\}}+\mathbb{E}_{\xi}\left[\left|y+S_{n+1}\right|1_{\{\tau_{y}=n+1\}}|\mathscr{F}^{\xi}_{n}\right].
	\end{aligned}
	$$
	
	Now we can choose $C$ and $K$ sufficiently large such that $$\mathbb{E}_{\xi}\left[\left|y+S_{n+1}\right|1_{\{\tau_{y}=n+1\}}|\mathscr{F}^{\xi}_{n}\right]
	-\left(\mathbb{E}_{\xi}\left[(C+y+S_{n+1})1_{\{\tau_{y}=n+1\}}|\mathscr{F}^{\xi}_{n}\right]\right)^{1-\frac{1}{2K+1}}\leq0.$$
	In fact, for any $0<\eta<1$, we can choose $N_3$ such that $$\mathbb{E}_{\xi}\left(|X_{n+1}|1_{\{|X_{n+1}|>N_3\}}\right)\leq\sum_{k=N_3}^{\infty}\mathbb{P}_{\xi}(|X_{n+1}|\geq k)<\eta.$$
	Then, we let $C>2N_3$ and $K$ large enough, it follows that $$\mathbb{E}_{\xi}\left[\left|y+S_{n}+X_{n+1}\right|1_{\{\tau_{y}=n+1\}}|\mathscr{F}^{\xi}_{n}\right]
	\leq \left(\mathbb{E}_{\xi}\left[(C+y+S_{n}+X_{n+1})1_{\{\tau_{y}=n+1\}}|\mathscr{F}^{\xi}_{n}\right]\right)^{1-\frac{1}{2K+1}}.$$
	
	Therefore, we obtain (\ref{supermartingale property}) immediately from the above arguments.
\end{proof}

\begin{proof}[Proof of claim 2 of Theorem \ref{Th1}]
	Note that $U(\xi,0)>0$ for almost all $\xi$ under the assumption $\mathbb{P}_\xi(X_1>0)>0, ~P\text{-a.s.}$. For any $y>0$, since $y+S_{\tau_{y}}\leq0$, by (\ref{optional stopping}) and the definition of $\tau_{y}$, we have, for any $n\geq1$, $$\mathbb{E}_\xi(y+S_{n};\tau_{y}>n)=y-\mathbb{E}_\xi(y+S_{\tau_{y}};\tau_{y}\leq n)\geq y.$$ Taking the limit as $n\to\infty$ and using Proposition \ref{Prop1}, we get $U(\xi,y)\geq y>0$.
	
	For any $y_1\leq y_2$, then $\tau_{y_1}\leq \tau_{y_2}$. Therefore, for any $n\geq1$, $$\mathbb{E}_\xi(y_1+S_{n};\tau_{y_1}>n)\leq \mathbb{E}_\xi(y_2+S_{n};\tau_{y_1}>n)\leq \mathbb{E}_\xi(y_2+S_{n};\tau_{y_2}>n)$$ Taking the limit again, we get $U(\xi,y_1)\leq U(\xi,y_2)$.
	
	Since $f(y+S_{n\wedge\tau_y})$ is a non-negative supermartingale, for $y>0$, by the optional stopping theorem,
	\begin{equation}\nonumber
		\mathbb{E}_\xi|y+S_{\tau_y}|\leq (C+y)^{1-\frac{1}{2K+1}}.
	\end{equation}
	Indeed, by the Fatou's lemma and supermartingale property,
		$$
		\begin{aligned}
			\mathbb{E}_{\xi}|y+S_{\tau_y}|=&\mathbb{E}_{\xi}\left[\lim_{n\to\infty}f\left(y+S_{n\wedge\tau_y}\right)\right]\\
			\leq&\liminf_{n\to\infty}\mathbb{E}_{\xi}\left[f\left(y+S_{n\wedge\tau_y}\right)\right]\\
			\leq&\mathbb{E}_{\xi}\left[f\left(y+S_{0}\right)\right]\\
			=&(C+y)^{1-\frac{1}{2K+1}}.
		\end{aligned}
		$$
	This implies
	\begin{equation}\nonumber
		\lim\limits_{y\to\infty}\frac{-\mathbb{E}_\xi(S_{\tau_y})}{y}=1.
	\end{equation}
	The proof of Theorem \ref{Th1} is completed.
\end{proof}


\subsection{Proof of Corollary \ref{Coro1}}

\begin{proof}[Proof of Corollary \ref{Coro1}]
	\textit{Proof of claim 1.} For any $B\in\sigma\left(\xi\right)\otimes\mathscr{F}^{\xi}_{n}$, by the quenched harmonic property and independence, we get
	\begin{equation}\nonumber
		\mathbb{E}_\xi\left[U(\theta^{n+1}\xi,y+S_{n+1})1_{\{\tau_{y}>n+1\}}1_B\right]=\mathbb{E}_\xi\left[U(\theta^n\xi,y+S_n)1_{\{\tau_{y}>n\}}1_B\right],
	\end{equation}
	which implies the martingale property.
	
	\textit{Proof of claim 2.} It follows from (\ref{asymptotic behaviour of harmonic function 1}) that for each $n\in\mathbb{N^*}$ and $y\geq0$,
	\begin{equation}\nonumber
		\lim\limits_{y\to\infty}\frac{U(\theta^n\xi,y)}{y}=1,~~~~P\text{-a.s.}.
	\end{equation}
	Using the Egorov's theorem, we have the uniform convergence with positive probability, hence $$P\left(\lim\limits_{n\to\infty}\frac{U(\theta^n\xi,y_n)}{y_n}=1\right)>0.$$
	Note that $\left\{\lim\limits_{n\to\infty}\frac{U(\theta^n\xi,y_n)}{y_n}=1\right\}$ is $\theta$-invariant set and $\theta$ is ergodic by hypothesis, we obtain
	$$P\left(\lim\limits_{n\to\infty}\frac{U(\theta^n\xi,y_n)}{y_n}=1\right)=1.$$
	
	\textit{Proof of claim 3.} In our situation, for almost every realization of $\xi$, the conditions of Theorem 1 in \cite{DSW18} hold, and $B_{n}\sim \sqrt{n}\sigma$ $P$-a.s. as $n\to\infty$, we can obtain this claim by the same argument developed there.
\end{proof}


\section{Proof of Theorem \ref{Th3}}

\subsection{The case $x=0$: Convergence of $\mathbf{P}^{+,N}_{\xi}$}

In this section, we prove Theorem \ref{Th3} for the case $x=0$, that is, for almost all $\xi$, we have
$\mathbf{P}_{\xi}^{+,N}\Longrightarrow \mathbf{P}^+,~\text{as}~ N\to \infty.$ By the same argument as Theorem 8.1.11 of Durrett \cite{Dur19}, we only need to prove that the sequence of $\mathbf{P}_{\xi}^{+,N}$ converges in law to $\mathbf{P}^+$ on $\Omega_1$ for almost all $\xi$. It is worth noting that we may sometimes omit the word ``for almost all $\xi$'' for the sake of brevity.

We first obtain the absolute continuity between $\mathbf{P}^{+,N}_{\xi}$ restricted to $\Omega_1$ and the meander $\mathbf{P}^{(m),N}_{\xi}$. That is, for any $A\in\mathscr{F}_1$,
\begin{equation}\label{absolute continuity 1}
	\mathbf{P}^{+,N}_{\xi}(A)=\mathbf{E}^{(m),N}_{\xi}\left(\tilde{U}(\xi,N,W_1)1_A\right),
\end{equation} where
\begin{equation}\label{rescaled harmonic function}
	\tilde{U}(\xi,N,x):=\frac{U(\theta^N\xi,\sqrt{N}\sigma x)\mathbb{P}_\xi(\tau_{0}>N)}{U(\xi,0)}.
\end{equation}
In fact, on the one hand, by (\ref{Prob of RW stay positive up to N}), (\ref{Prob of RW stay positive}) and (\ref{rescaled prob on Omega}), we have
$$
\begin{aligned}
	\mathbf{P}_{\xi}^{+,N}(A)=&\mathbb{P}^+_{\xi}\left((\phi_N)^{-1}(A)\right)\\
=&\frac{\mathbb{E}_{\xi}\left(U(\theta^N \xi, S_N)1_{\{\tau_0>N\}}1_{(\phi_N)^{-1}(A)}\right)}{U(\xi,0)}\\
	=&\frac{\mathbb{E}_{\xi}\left(U(\theta^N \xi, S_N)1_{(\phi_N)^{-1}(A)}| \tau_0>N \right) \mathbb{P}_\xi(\tau_0>N)}{U(\xi,0)}\\
	=&\frac{\mathbb{E}_{\xi}^{(m),N}\left(U(\theta^N \xi, S_N)1_{(\phi_N)^{-1}(A)}\right)\mathbb{P}_\xi(\tau_0>N)}{U(\xi,0)}.
\end{aligned}
$$
On the other hand, by (\ref{rescaled meander measure on Omega_1}) and (\ref{rescaled harmonic function}), we have
$$
\begin{aligned}
	\mathbf{E}^{(m),N}_{\xi}\left(\tilde{U}(\xi,N,W_1)1_A\right)=&\mathbb{E}^{(m),N}_{\xi}\left(\tilde{U}(\xi,N,\frac{S_N}{\sqrt{N}\sigma})1_{(\phi_N)^{-1}(A)}\right)\\
	=&\frac{\mathbb{E}_{\xi}^{(m),N}\left(U(\theta^N \xi, S_N)1_{(\phi_N)^{-1}(A)}\right)\mathbb{P}_\xi(\tau_0>N)}{U(\xi,0)}.
\end{aligned}
$$
Hence, (\ref{absolute continuity 1}) follows.

Recall that the absolute continuity relation holds between $\mathbf{P}^{+}$ restricted to $\Omega_1$ and the meander $\mathbf{P}^{(m)}$. That is, for any $A\in\mathscr{F}_1$,
\begin{equation}\label{absolute continuity 2}
	\mathbf{P}^{+}(A)=\mathbf{E}^{(m)}\left(\sqrt{\frac{2}{\pi}}V(W_1)1_A\right),
\end{equation} where $V(x)$ is the harmonic function of killed Brownian motion as in (\ref{Prob of Bessel process}).

We then prove that $\mathbf{P}_{\xi}^{+,N}\Longrightarrow \mathbf{P}^+$ as $N\to \infty$. That is we show that, for any bounded and continuous functional $H$ on $\Omega_1$, $\mathbf{E}_{\xi}^{+,N}(H)\to \mathbf{E}^+(H)$ as $N\to\infty$. Thanks to (\ref{absolute continuity 1}) and (\ref{absolute continuity 2}), this is equivalent to prove that
\begin{equation}\label{weak convergence of meander}
	\mathbf{E}^{(m),N}_{\xi}\left(\tilde{U}(\xi,N,W_1)H\right)\longrightarrow \mathbf{E}^{(m)}\left(\sqrt{\frac{2}{\pi}}V(W_1)H\right), ~~~~as~~N\to\infty.
\end{equation}

We are devoted to proving (\ref{weak convergence of meander}) by a truncation argument and the quenched invariance principle (\ref{QIP for meander}). Let $L>0$, we define the truncation function $I_{L}(x)$ as
\begin{equation}\label{truncation function}
	I_{L}(x):=\begin{cases}
		0, &x\geq L+1;\\
		L+1-x, &L<x<L+1;\\
		1, &x\leq L.
	\end{cases}
\end{equation}

Rewrite both sides of (\ref{weak convergence of meander}) as
$$
\begin{aligned}
	&\mathbf{E}^{(m),N}_{\xi}\left(\tilde{U}(\xi,N,W_1)H\right)\\
	=&\mathbf{E}^{(m),N}_{\xi}\left(\tilde{U}(\xi,N,W_1)HI_{L}\left(W_{1}\right)\right)+\mathbf{E}^{(m),N}_{\xi}\left(\tilde{U}(\xi,N,W_1)H\left(1-I_{L}\left(W_{1}\right)\right)\right)
\end{aligned}
$$
and
$$
\begin{aligned}
	&\mathbf{E}^{(m)}\left(\sqrt{\frac{2}{\pi}}V(W_1)H\right)\\
	=&\mathbf{E}^{(m)}\left(\sqrt{\frac{2}{\pi}}V(W_1)HI_{L}\left(W_{1}\right)\right)+\mathbf{E}^{(m)}\left(\sqrt{\frac{2}{\pi}}V(W_1)H\left(1-I_{L}\left(W_{1}\right)\right)\right).
\end{aligned}
$$
Using the triangle inequality, we have
$$
\begin{aligned}
	&\left| \mathbf{E}^{(m),N}_{\xi} \left(\tilde{U}(\xi,N,W_1)H\right)-\mathbf{E}^{(m)}\left(\sqrt{\frac{2}{\pi}}V(W_1)H\right) \right| \\
	\leq &\left| \mathbf{E}^{(m),N}_{\xi}\left(\tilde{U}(\xi,N,W_1)H I_{L}\left(W_{1}\right)\right)-\mathbf{E}^{(m)}\left(\sqrt{\frac{2}{\pi}}V(W_1)H I_{L}\left(W_{1}\right)\right)\right| \\
	&+\left|	\mathbf{E}^{(m),N}_{\xi}\left(\tilde{U}(\xi,N,W_1)H\left(1-I_{L}\left(W_{1}\right)\right)\right)\right|+\left|\mathbf{E}^{(m)}\left(\sqrt{\frac{2}{\pi}}V(W_1)H\left(1-I_{L}\left(W_{1}\right)\right)\right)\right|\\
	\leq &\left| \mathbf{E}^{(m),N}_{\xi}\left(\tilde{U}(\xi,N,W_1)H I_{L}\left(W_{1}\right)\right)-\mathbf{E}^{(m)}\left(\sqrt{\frac{2}{\pi}}V(W_1)H I_{L}\left(W_{1}\right)\right)\right| \\
	&+M\mathbf{E}^{(m),N}_{\xi}\left(\tilde{U}(\xi,N,W_1)\left(1-I_{L}\left(W_{1}\right)\right)\right)+M\mathbf{E}^{(m)}\left(\sqrt{\frac{2}{\pi}}V(W_1)\left(1-I_{L}\left(W_{1}\right)\right)\right),
\end{aligned}
$$
where the last inequality follows from the boundedness of $H$ (say $|H|\leq M$) and the non-negativity of the terms $\tilde{U}(\xi,N,W_1)\left(1-I_{L}\left(W_{1}\right)\right)$ and $\sqrt{\frac{2}{\pi}}V(W_1)\left(1-I_{L}\left(W_{1}\right)\right)$. By (\ref{absolute continuity 1}) and (\ref{absolute continuity 2}), we have $	\mathbf{E}^{(m),N}_{\xi}\left(\tilde{U}(\xi,N,W_1)\right)=1$ and $\mathbf{E}^{(m)}\left(\sqrt{\frac{2}{\pi}}V(W_1)\right)=1$, then
\begin{equation}\label{triangle inequality}
	\begin{aligned}
		&\left| \mathbf{E}^{(m),N}_{\xi} \left(\tilde{U}(\xi,N,W_1)H\right)-\mathbf{E}^{(m)}\left(\sqrt{\frac{2}{\pi}}V(W_1)H\right) \right| \\
		\leq &\left| \mathbf{E}^{(m),N}_{\xi}\left(\tilde{U}(\xi,N,W_1)H I_{L}\left(W_{1}\right)\right)-\mathbf{E}^{(m)}\left(\sqrt{\frac{2}{\pi}}V(W_1)H I_{L}\left(W_{1}\right)\right)\right| \\
		&+M\left(1-	\mathbf{E}^{(m),N}_{\xi}\left(\tilde{U}(\xi,N,W_1) I_{L}\left(W_{1}\right)\right)\right) \\
		&+M\left(1-\mathbf{E}^{(m)}\left(\sqrt{\frac{2}{\pi}}V(W_1) I_{L}\left(W_{1}\right)\right)\right).
	\end{aligned}
\end{equation}

We claim that for all $L>0$, as $N \rightarrow \infty$,
\begin{equation}\label{first term of triangle inequality}
	\left| \mathbf{E}^{(m),N}_{\xi}\left(\tilde{U}(\xi,N,W_1)H I_{L}\left(W_{1}\right)\right)-\mathbf{E}^{(m)}\left(\sqrt{\frac{2}{\pi}}V(W_1)H I_{L}\left(W_{1}\right)\right)\right| \rightarrow 0.
\end{equation}
We postpone the proof of $\left(\ref{first term of triangle inequality}\right)$ and return to $\left(\ref{triangle inequality}\right)$. It follows that the second term in the right hand side of (\ref{triangle inequality}) converges to the third term by taking $H \equiv 1$ in $\left(\ref{first term of triangle inequality}\right)$, and let $L\to\infty$, the third term vanished since $\mathbf{E}^{(m)}\left(\sqrt{\frac{2}{\pi}}V(W_1)\right)=1$. Therefore, the term in the left hand side of $\left(\ref{triangle inequality}\right)$ tends to zero as $N\to\infty$, (\ref{weak convergence of meander}) follows.

It remains to prove (\ref{first term of triangle inequality}). By the triangle inequality again, we have
\begin{equation}\label{triangle inequality 2}
	\begin{aligned}
		&\left|	\mathbf{E}^{(m),N}_{\xi}\left(\tilde{U}(\xi,N,W_1)H I_{L}\left(W_{1}\right)\right)-\mathbf{E}^{(m)}\left(\sqrt{\frac{2}{\pi}}V(W_1)H I_{L}\left(W_{1}\right)\right)\right| \\
		\leq&M\sup_{x\in(0,L]}\left|\tilde{U}(\xi,N,x)-\sqrt{\frac{2}{\pi}}V(x)\right|\\
		&+\left| \mathbf{E}^{(m),N}_{\xi}\left(\sqrt{\frac{2}{\pi}}V(W_1)H I_{L}\left(W_{1}\right)\right)-\mathbf{E}^{(m)}\left(\sqrt{\frac{2}{\pi}}V(W_1)H I_{L}\left(W_{1}\right)\right)\right|.
	\end{aligned}
\end{equation}
Since the functional $V(W_1)H I_{L}\left(W_{1}\right)$ is $\mathbf{P}^{(m)}$-a.s. bounded and continuous, it follows from the quenched invariance principle (\ref{QIP for meander}) that the second term tends to zero.

Now we are left to consider the first term of right hand side of (\ref{triangle inequality 2}). Recall (\ref{asymptotic behaviour of harmonic function 2}), by the uniform convergence theorem of regularly varying functions, Theorem 1.5.2 in Bingham, Goldie and Teugels \cite{BGT87}, we have for any $0<L<\infty$, $$U(\theta^N\xi,\sqrt{N}\sigma x)\sim \sqrt{N}\sigma x, ~~~~as~~N\to\infty,$$ uniformly for $x\in (0,L]$. Therefore, by (\ref{rescaled harmonic function}), $$\tilde{U}(\xi,N,x)\sim \frac{\sqrt{N}\sigma x \mathbb{P}_\xi(\tau_{0}>N)}{U(\xi,0)},$$ uniformly for $x\in (0,L]$. It follows from (\ref{asymptotic behaviour of RW stay positive}) that
\begin{equation}\label{uniform convergence}
	\sup_{x\in(0,L]}\left|\tilde{U}(\xi,N,x)-\sqrt{\frac{2}{\pi}}V(x)\right| \to0, ~~~~as~~N\to\infty.
\end{equation}
This yields (\ref{first term of triangle inequality}), and hence conclude the proof of Theorem \ref{Th3} for $x=0$.
\qed


\subsection{The case $x>0$: Convergence of $\mathbf{P}_{\xi,~x}^{+,N}$}

In this section, we prove Theorem \ref{Th3} for the case $x>0$, that is, for almost all $\xi$, we have $\mathbf{P}_{\xi,~x}^{+,N}\Longrightarrow \mathbf{P}_x^+,~\text{as}~ N\to \infty.$ As before, we only need to prove that the sequence of $\mathbf{P}_{\xi,~x}^{+,N}$ converges in law to $\mathbf{P}_x^+$ on $\Omega_1$ for almost all $\xi$. By the same route as in Section 3.1, we prove that, for any bounded and continuous functional $H$: $\Omega_1\to \mathbb{R}$, one has $\mathbf{E}_{\xi,~x}^{+,N}(H)\to \mathbf{E}_x^+(H)$ as $N\to\infty$.

It follows from the definitions (\ref{Prob of RW stay positive}) and (\ref{rescaled prob on Omega}) that
\begin{equation}\nonumber
	\mathbf{P}^{+,N}_{\xi,~x}(A)=\frac{\mathbf{E}^{N}_{\xi,~x}\left(\tilde{U}(\xi,N,W_1)1_{\{\underline{W}_1>0\}}1_A\right)}{U(\xi,\sqrt{N}\sigma x)\mathbb{P}_{\xi}(\tau_{0}>N)/U(\xi,0)}.
\end{equation}
So, by the definition (\ref{Prob of Bessel process}), we get $$\left|\mathbf{E}_{\xi,~x}^{+,N}(H)-\mathbf{E}_x^+(H)\right| =\left|\frac{\mathbf{E}^{N}_{\xi,~x}\left(\tilde{U}(\xi,N,W_1)1_{\{\underline{W}_1>0\}}H\right)}{U(\xi,\sqrt{N}\sigma x)\mathbb{P}_{\xi}(\tau_{0}>N)/U(\xi,0)}-\frac{\mathbf{E}_x\left(\sqrt{\frac{2}{\pi}}V(W_1)1_{\{\underline{W}_1 >0\}}H\right)}{\sqrt{\frac{2}{\pi}}V(x)}\right|.$$
By (\ref{asymptotic behaviour of harmonic function 1}) and (\ref{asymptotic behaviour of RW stay positive}), we have
\begin{equation}\label{a.s. convergence}
	\frac{U(\xi,\sqrt{N}\sigma x)\mathbb{P}_{\xi}(\tau_{0}>N)}{U(\xi,0)}\to \sqrt{\frac{2}{\pi}}V(x),~~~~as~~N\to\infty.
\end{equation}

Now we only need to prove that
\begin{equation}\label{weak convergence of unconditioned law}
	\mathbf{E}^{N}_{\xi,~x}\left(\tilde{U}(\xi,N,W_1)1_{\{\underline{W}_1>0\}}H\right)\longrightarrow \mathbf{E}_{x}\left(\sqrt{\frac{2}{\pi}}V(W_1)1_{\{\underline{W}_1>0\}}H\right), ~~~~as~~N\to\infty.
\end{equation}

We proceed as in Section 3.1. Recall that $I_{L}(x)$ is the truncation function defined in (\ref{truncation function}).
By the triangle inequality, we obtain

\begin{equation}
	\begin{aligned}
		&\left| \mathbf{E}^{N}_{\xi,~x} \left(\tilde{U}(\xi,N,W_1)1_{\{\underline{W}_1>0\}}H\right)-\mathbf{E}_{x}\left(\sqrt{\frac{2}{\pi}}V(W_1)1_{\{\underline{W}_1>0\}}H\right) \right| \\
		\leq &\left| \mathbf{E}^{N}_{\xi,~x}\left(\tilde{U}(\xi,N,W_1)1_{\{\underline{W}_1>0\}}H I_{L}\left(W_{1}\right)\right)-\mathbf{E}_{x}\left(\sqrt{\frac{2}{\pi}}V(W_1)1_{\{\underline{W}_1>0\}}H I_{L}\left(W_{1}\right)\right)\right| \\
		&+\left|	\mathbf{E}^{N}_{\xi,~x}\left(\tilde{U}(\xi,N,W_1)1_{\{\underline{W}_1>0\}}H\left(1-I_{L}\left(W_{1}\right)\right)\right)\right| \nonumber\\
		&+\left|\mathbf{E}_{x}\left(\sqrt{\frac{2}{\pi}}V(W_1)1_{\{\underline{W}_1>0\}}H\left(1-I_{L}\left(W_{1}\right)\right)\right)\right| \\
		\leq &\left| \mathbf{E}^{N}_{\xi,~x}\left(\tilde{U}(\xi,N,W_1)1_{\{\underline{W}_1>0\}}H I_{L}\left(W_{1}\right)\right)-\mathbf{E}_{x}\left(\sqrt{\frac{2}{\pi}}V(W_1)1_{\{\underline{W}_1>0\}}H I_{L}\left(W_{1}\right)\right)\right| \\
		&+M\mathbf{E}^{N}_{\xi,~x}\left(\tilde{U}(\xi,N,W_1)1_{\{\underline{W}_1>0\}}\left(1-I_{L}\left(W_{1}\right)\right)\right)\\
		&+M\mathbf{E}_{x}\left(\sqrt{\frac{2}{\pi}}V(W_1)1_{\{\underline{W}_1>0\}}\left(1-I_{L}\left(W_{1}\right)\right)\right),
	\end{aligned}
\end{equation}
where the last inequality holds since $H$ is bounded (let again $|H|\leq M$) and the terms \\ $\tilde{U}(\xi,N,W_1)1_{\{\underline{W}_1>0\}}\left(1-I_{L}\left(W_{1}\right)\right)$ and $\sqrt{\frac{2}{\pi}}V(W_1)1_{\{\underline{W}_1>0\}}\left(1-I_{L}\left(W_{1}\right)\right)$ are non-negative. \\
Note that (recall (\ref{Prob of Bessel process}), (\ref{Prob of RW stay positive}) and (\ref{rescaled prob on Omega})) the following two equalities hold:
\begin{equation}\label{two equalities}
	\begin{aligned}
		&\mathbf{E}^{N}_{\xi,~x}\left(\tilde{U}(\xi,N,W_1)1_{\{\underline{W}_1>0\}}\right)=\frac{U(\xi,\sqrt{N}\sigma x)\mathbb{P}_{\xi}(\tau_{0}>N)}{U(\xi,0)}, \\ &\mathbf{E}_{x}\left(\sqrt{\frac{2}{\pi}}V(W_1)1_{\{\underline{W}_1>0\}}\right)=\sqrt{\frac{2}{\pi}}V(x).
	\end{aligned}	
\end{equation}
Then,
\begin{equation}\label{triangle inequality 3}
	\begin{aligned}
		&\left| \mathbf{E}^{N}_{\xi,~x} \left(\tilde{U}(\xi,N,W_1)1_{\{\underline{W}_1>0\}}H\right)-\mathbf{E}_{x}\left(\sqrt{\frac{2}{\pi}}V(W_1)1_{\{\underline{W}_1>0\}}H\right) \right| \\
		\leq &\left| \mathbf{E}^{N}_{\xi,~x}\left(\tilde{U}(\xi,N,W_1)1_{\{\underline{W}_1>0\}}H I_{L}\left(W_{1}\right)\right)-\mathbf{E}_{x}\left(\sqrt{\frac{2}{\pi}}V(W_1)1_{\{\underline{W}_1>0\}}H I_{L}\left(W_{1}\right)\right)\right| \\
		&+M\left[\frac{U(\xi,\sqrt{N}\sigma x)\mathbb{P}_{\xi}(\tau_{0}>N)}{U(\xi,0)}-\mathbf{E}^{N}_{\xi,~x}\left(\tilde{U}(\xi,N,W_1)1_{\{\underline{W}_1>0\}}I_{L}\left(W_{1}\right)\right)\right] \\
		&+M\left[\sqrt{\frac{2}{\pi}}V(x)-\mathbf{E}_{x}\left(\sqrt{\frac{2}{\pi}}V(W_1)1_{\{\underline{W}_1>0\}}I_{L}\left(W_{1}\right)\right)\right].
	\end{aligned}
\end{equation}

We claim that for all $L>0$, as $N \rightarrow \infty$,
\begin{equation}\label{first term of triangle inequality 3}
	\left| \mathbf{E}^{N}_{\xi,~x}\left(\tilde{U}(\xi,N,W_1)1_{\{\underline{W}_1>0\}}H I_{L}\left(W_{1}\right)\right)-\mathbf{E}_{x}\left(\sqrt{\frac{2}{\pi}}V(W_1)1_{\{\underline{W}_1>0\}}H I_{L}\left(W_{1}\right)\right)\right| \rightarrow 0.
\end{equation}
In fact, by the triangle inequality again, we get
\begin{equation}\label{triangle inequality 4}
	\begin{aligned}
		&\left| \mathbf{E}^{N}_{\xi,~x}\left(\tilde{U}(\xi,N,W_1)1_{\{\underline{W}_1>0\}}H I_{L}\left(W_{1}\right)\right)-\mathbf{E}_{x}\left(\sqrt{\frac{2}{\pi}}V(W_1)1_{\{\underline{W}_1>0\}}H I_{L}\left(W_{1}\right)\right)\right| \\
		\leq&\left| \mathbf{E}^{N}_{\xi,~x}\left(\sqrt{\frac{2}{\pi}}V(W_1)1_{\{\underline{W}_1>0\}}H I_{L}\left(W_{1}\right)\right)-\mathbf{E}_{x}\left(\sqrt{\frac{2}{\pi}}V(W_1)1_{\{\underline{W}_1>0\}}H I_{L}\left(W_{1}\right)\right)\right| \\
		&+M\sup_{x\in(0,L]}\left|\tilde{U}(\xi,N,x)-\sqrt{\frac{2}{\pi}}V(x)\right|.
	\end{aligned}
\end{equation}
Since the functional $V(W_1)1_{\{\underline{W}_1>0\}}HI_{L}\left(W_{1}\right)$ is $\mathbf{P}_{x}$-a.s. bounded and continuous, the first term in the right hand side of $\left(\ref{triangle inequality 4}\right)$ vanishes as $N\to\infty$ by the quenched invariance principle of non-conditioning (\ref{QIP for random walk without conditioning}), then (\ref{first term of triangle inequality 3}) follows from (\ref{uniform convergence}).

Now we return to $\left(\ref{triangle inequality 3}\right)$. By $\left(\ref{a.s. convergence}\right)$ and $\left(\ref{first term of triangle inequality 3}\right)$ (let $H \equiv 1$), the second term in the right hand side of $\left(\ref{triangle inequality 3}\right)$ converges to the third term, and let $L\to\infty$, the third term tends to zero by (\ref{two equalities}), Therefore, the term in the left hand side of $\left(\ref{triangle inequality 3}\right)$ tends to zero as $N\to\infty$, (\ref{weak convergence of unconditioned law}) follows. This completes the proof of Theorem \ref{Th3}.
\qed


\section*{Acknowledgments}
\thanks{This work was supported in part by the National Key Research and Development Program of China (No.~2020YFA0712900), NSFC (No.~11971062).}

%


\end{document}